\definecolor{black}{rgb}{0.0, 0.0, 0.0}
\definecolor{red}{rgb}{1.0, 0.5, 0.5}
\newcommand{\margnote}[1]{
\ifthenelse{\boolean{shownotes}}%
{\marginpar{\raggedright\tiny\texttt{#1}}}%
{}%
}
\newcommand{\hole}[1]{
\ifthenelse{\boolean{shownotes}}%
{\begin{center} \fbox{ \rule {.25cm}{0cm} \rule[-.1cm]{0cm}{.4cm}
\parbox{.85\textwidth}{\begin{center} \texttt{#1}\end{center}} \rule
{.25cm}{0cm}}\end{center}} {} }
\title[Propagation of chaos for the VPFP equation with a polynomial cut-off]{Propagation of chaos for the Vlasov-Poisson-Fokker-Planck equation with a polynomial cut-off}
\author[Carrillo]{Jos\'{e} A. Carrillo}
\address[Jos\'{e} A. Carrillo]{\newline Department of Mathematics
    \newline Imperial College London, London SW7 2AZ, United Kingdom}
\email{carrillo@imperial.ac.uk}
\author[Choi]{Young-Pil Choi}
\address[Young-Pil Choi]{\newline Department of Mathematics and Institute of Applied Mathematics\newline
Inha University, Incheon 402-751, Republic of Korea}
\email{ypchoi@inha.ac.kr}
\author[Salem]{Samir Salem}
\address[Samir Salem]{\newline CEREMADE \newline
     Paris Dauphine, Place du Mar\'echal De Lattre De Tassigny · 75775 Paris CEDEX 16, France}
\email{salem@ceremade.dauphine.fr}
\numberwithin{equation}{section}
\newtheorem{theorem}{Theorem}[section]
\newtheorem{lemma}{Lemma}[section]
\newtheorem{proposition}{Proposition}[section]
\newtheorem{remark}{Remark}[section]
\newcommand{\R}{\mathbb R}
\newcommand{\N}{\mathbb N}
\newcommand{\VV}{\mathcal{V}}
\newcommand{\WW}{\mathcal{W}}
\newcommand{\XX}{\mathcal{X}}
\newcommand{\YY}{\mathcal{Y}}
\newcommand{\pp}{\mathcal P}
\newcommand{\e}{\varepsilon}
\newcommand{\lal}{\langle}
\newcommand{\ral}{\rangle}
\newcommand{\mc}{\mathcal{C}}
\newcommand{\lt}{\left}
\newcommand{\rt}{\right}
\newcommand{\pa}{\partial}
\newcommand{\mb}{\mathbf{1}}
\newcommand{\bq}{\begin{equation}}
\newcommand{\eq}{\end{equation}}
\newcommand{\LL}{\mathcal{L}}
\newcommand{\E}{\mathbb{E}}
\def\charf {\mbox{{\text 1}\kern-.30em {\text l}}}
\begin{document}
\allowdisplaybreaks

\date{\today}



\begin{abstract} We consider a $N$-particle system interacting through the Newtonian potential with a polynomial cut-off in the presence of noise in velocity. We rigorously prove the propagation of chaos for this interacting stochastic particle system. Taking the cut-off like $N^{-\delta}$ with $\delta < 1/d$ in the force, we provide a quantitative error estimate between the empirical measure associated to that $N$-particle system and the solutions of the $d$-dimensional Vlasov-Poisson-Fokker-Planck system. We also study the propagation of chaos for the Vlasov-Fokker-Planck equation with less singular interaction forces than the Newtonian one.
\end{abstract}

\maketitle \centerline{\date}


%
%
%
%
\section{Introduction}\label{intro}

The starting point are the classical Newton dynamics for point particles interacting through the interaction force $F$ in the presence of noise:
\bq\label{par_VP}
\left\{ \begin{array}{ll}
dX_t^i=V_t^idt,  & \\[1mm]
\displaystyle dV_t^i=\frac{1}{N}\sum_{j=1}^N F(X_t^i-X_t^j)\,dt+\sqrt{2\sigma} dB^i_t, & 
\end{array} \right. \quad i=1,\cdots,N, \quad t > 0,
\eq
where $(\mathcal{X}_t,\mathcal{V}_t):=(X_t^1,\cdots,X_t^N,V_t^1,\cdots V_t^N)\in \R^{2dN}$ are positions and velocities of the particles in $\R^d$, and $\sigma > 0$ is the noise strength. Here, $\{ \lt(B^i_t\rt)_{t\geq 0}\}_{i=1}^N$ are $N$ independent $d$-dimensional Brownian motions constructed over some probability space not specified for simplicity. There are many examples of physical systems of the form \eqref{par_VP}. Most classical examples of interaction kernels are the Coulombian or gravitational forces:
\bq\label{cou}
F(x) = \xi\frac{x}{|x|^{d}} \quad \mbox{where} \quad \xi = \left\{ \begin{array}{ll} +1 & \mbox{for plasma problems}\\
-1 & \mbox{for astrophysics problems}.
\end{array} \right.
\eq
We are interested in the rigorous derivation of the Vlasov-Poisson-Fokker-Planck (VPFP) equation as $N \to \infty$:
\begin{equation}
\label{eq:VPFP}
\left\{ \begin{array}{ll}
\displaystyle \pa_t f_t+v\cdot\nabla_x f_t+ (F*\rho_t)\cdot\nabla_v f_t=\sigma\Delta_v f_t, \quad (x,v) \in \R^d \times \R^d, \quad t > 0,& \\[2mm]
\displaystyle \rho_t(x)=\int_{\R^d} f_t(x,v)\,dv,  
\end{array} \right.
\end{equation}
with the Poisson force $F(x) = \xi x/|x|^{d},\xi = \pm 1$, where $f = f(x,v,t)$ is the probability density function at phase space $(x,v)$ at time $t$. Notice that the VPFP system is the formal mean-field limit equation associated to \eqref{par_VP}. This is a classical kinetic equation whose well-posedness and qualitative properties were studied in different settings: classical solutions \cite{VO,Bou}, weak solutions \cite{BD,CS1,CSV,Vi,He} and the references therein. We refer to \cite{G,Pe} for general theory on these problems.

The rigorous proof of the mean-field limit in the one dimensional case for \eqref{eq:VPFP} with the force $F$ given in \eqref{cou} was obtained in \cite{Hau, HS}. In \cite{JW}, the propagation of chaos for the system \eqref{eq:VPFP} with bounded forces, i.e., $F \in L^\infty(\R^d)$ is studied based on relative entropy arguments. The intimately related classical Vlasov-Poisson system has also received lots of attention in the last years. This system corresponds to \eqref{par_VP} and \eqref{eq:VPFP} with $\sigma=0$.
The propagation of chaos has been shown in \cite{HJ1, HJ2} for the less singular case, i.e., $|F(x)| \leq |x|^{-\alpha}$ with $\alpha < 1$. More recently the physically relevant case of the propagation of chaos for the Newtonian potential with polynomial cut-off has been obtained in \cite{LP}. 

The propagation of chaos and its consequence, the rigorous proof of the mean-field limit, are questions of nowadays importance in many other problems; for instance, in kinetic models of collective behavior \cite{BCC,CCR,CCHS,CS,CS2,CSp} with or without noise in velocity, see also \cite{Dob,Mc} for more general types of equations. However, in most of these applications the singularity of the kernels is much better behaved. The case of Holder interaction with Holder exponent greater than $2/3$ was also recently treated in \cite{Hol}. The main motivation of our present work is to obtain the propagation of chaos in the degenerate diffusion setting, as the noise acts only on velocity, in the classical case of Newtonian interaction with cut-off. 

Finally, let us point out that most of the propagation of chaos results for nonlinear nonlocal conservation equations with non Lipschitz interaction, concern first order models. Without diffusion, the mean field limit for the aggregation equation with interaction less singular than the Newtonian is studied in \cite{CCH}. With diffusion, the case of Holder interaction is treated in \cite{Hol} by quantitative means leading to explict rates of convergence. For the 2D Navier-Stokes equation in  vortex formulation, the propagation of chaos is obtained in \cite{MSN} by compactness, thanks to result of \cite{HM}. Similar techniques have been applied to the case of the diffusion dominated 2D Keller-Segel equations in \cite{GQ,S} and geometrical constraints interactions with reflecting diffusions in \cite{CS}.

Let us be more precise about the different approximation levels involved in the propagation of chaos. Consider the regularized particle system
\begin{equation}\label{eq:Nps_CO}
\left\{ \begin{array}{ll}
\displaystyle dX_t^{i,N}=V_t^{i,N}dt,     & \\[2mm]
\displaystyle dV_t^{i,N}=\frac{1}{N}\sum_{j=1}^N F^N_\delta (X_t^{i,N}-X_t^{j,N})\,dt+\sqrt{2\sigma} dB^{i,N}_t, & 
\end{array} \right. \quad i=1,\cdots,N, \quad t > 0,
\end{equation}
with the initial data $(X^{i,N}(0), V^{i,N}(0)) = (X^{i,N}_0, V^{i,N}_0)$ for $i=1,\cdots,N$, where the interaction force $F^N_\delta, \delta > 0$ is given by
\[
F^N_\delta(x) := \xi\frac{x}{(|x| \vee N^{-\delta})^d} \quad \mbox{and} \quad F^N_\delta(0) = 0,
\]
where $ a \vee b := \max\{a,b \}$. Note that this system of stochastic differential equations (SDE) \eqref{eq:Nps_CO} has a unique strong solution by the standard theorem for SDEs. It is worth mentioning that the sign of force term is not important for the propagation of chaos provided that we analyze it on a finite time interval. Thus we only focus on the case $\xi = 1$.

Our main purpose is to provide a qualitative error estimates between the solutions to the particle system \eqref{eq:Nps_CO} and the VPFP equation \eqref{eq:VPFP}. For this, we need to introduce an intermediate system of independent copies of {\it nonlinear SDEs with cut-off} given by
\begin{equation}\label{eq:NLps_CO}
\left\{ \begin{array}{ll}
dY_t^{i,N}=W_t^{i,N}dt, \quad i=1,\cdots, N, \quad t > 0, & \\[2mm]
\displaystyle dW_t^{i,N}=F^N_{\delta}*\rho_t^N(Y_t^{i,N})dt+\sqrt{2\sigma }dB^{i,N}_t,  \quad \rho^N_t=\LL(Y_t^{i,N}), &  \\[2mm]
(Y^{i,N}_0, W^{i,N}_0) = (X^{i,N}_0, V^{i,N}_0) \quad \mbox{for} \quad i =1,\cdots,N, &  
\end{array} \right.
\end{equation}
where $\rho_t^N = \int_{\R^d} f_t^N\,dv$, and $f_t^N=\LL(Y_t^{i,N},W_t^{i,N})$, for all $i=1,\cdots, N$, is the global-in-time weak solution to the regularized VPFP system
\bq\label{reg_VPFP}
\pa_t f_t^N+v\cdot\nabla_x f_t^N+ (F^N_\delta *\rho_t^N)\cdot\nabla_v f_t^N=\sigma\Delta_v f_t^N, \quad (x,v) \in \R^d \times \R^d, \quad t > 0,
\eq
with the initial data $f_0^N = \LL(Y_0^{i,N},W_0^{i,N})$, $i=1,\cdots,N$. Due to the regularization, we can easily obtain the global existence of weak solutions to the equation \eqref{reg_VPFP} under suitable assumptions on the initial data. Solving the system \eqref{eq:NLps_CO} with the given $\rho_t^N$ to get the existence and uniqueness of solutions to the system \eqref{eq:NLps_CO} is by now standard.

Before stating our main result in this paper, we need to introduce some notions and notations. We define the empirical measure $\mu_t^N$ associated to a solution to the particle system \eqref{eq:Nps_CO} and $\nu_t^N$ the one associated to the nonlinear independent particle \eqref{eq:NLps_CO} as
\[
\mu^N_t = \frac1N \sum_{i=1}^N\delta_{(X_t^{i,N},V_t^{i,N})} \quad \mbox{and} \quad \nu^N_t = \frac1N \sum_{i=1}^N\delta_{(Y_t^{i,N},W_t^{i,N})},
\]
respectively. For a function $f$ and $p \in [1,\infty]$, $\|f\|_p$ represents the usual $L^p$-norm for functions in $L^p(\R^{2d})$, and $\|f\|_{p,q}:=\|f\|_p + \|f\|_q$ for $p,q\in[1,\infty]$. For $T > 0$,  $L^p(0,T,E)$ is the set of $L^p$ functions from an interval $(0,T)$ to a Banach space $E$. $\pp(\R^{2d})$ and $\pp_p(\R^{2d})$ stand for the sets of all probability measures and probability measures with finite moments of order $p \in [1,\infty)$. We consider the optimal transport distances $\WW_p$, $p \in [1,\infty]$, on $\pp_p(\R^{2d})$, see \cite{Villani} for classical definitions and properties of optimal transport distances. We also denote by $C$ a generic positive constant independent of $N$. 

The main result of this work gives an explicit estimate on the decay of the distance between the empirical measure of the interacting particle system $\mu^N_t$ and the solution of the VPFP equation $f_t$ in optimal transport distances in the probability sense.

\begin{theorem}\label{thm:PropChao}
Let $T > 0, d > 1$ and $\delta < 1/d$. Let $(X_0^{i,N}, V_0^{i,N})_{i=1,\cdots,N}$ be $N$ independent random variables with law $f_0$. Let $f_t$ and $f_t^N$ be the solutions to the VPFP equation \eqref{eq:VPFP} and its regularized version \eqref{reg_VPFP}, respectively, up to time $T > 0$. Assume further that $f, f^N \in L^\infty(0,T; (L^1 \cap L^\infty)(\R^{2d})) \cap \mc([0,T]; \pp_q(\R^{2d}))$ with the same initial data $f_0 \in (L^1 \cap L^\infty \cap \pp_q)(\R^{2d})$ for some $q \geq 2$, and that their respective spatial densities $\rho_t$ and $\rho^N_t$ satisfy
\[
\int_0^T \|\rho_t\|_{\infty}\,dt \leq C_0 \quad \mbox{and} \quad \sup_{N\in \mathbb{N}}\int_0^T \|\rho^N_t\|_{\infty}\,dt \leq C_0,
\]
with $C_0$ independent of $N$. 
Then, given any $p \in [1,2q)$, $\gamma < \lt( \frac{1}{2(d \vee  p)} \wedge \delta \rt)$, and $\e \in \lt(0, q - \frac{p}{1 - p \gamma} \rt)$, the estimate 
\[
\sup_{0 \leq t \leq T}\mathbb{P}\lt(\WW_p(\mu_t^N,f_t)\geq CN^{-\gamma} \rt) \leq  C N^{\frac1p\lt(1 - \frac{(1-p\gamma)(q - \e)}{p} \rt)} + C_N, \qquad \mbox{for $N$ large enough},
\]
holds for some constant $C>0$ depending only on $d,T,p,q,\e, f_0$, and $C_0$. Here $C_N$ is given by
\[
-\log C_N=\begin{cases}
\displaystyle \frac Cp N^{1 - 2p\gamma} & \text{ if } p>d, \\[1mm] 
\displaystyle \frac{CN^{1 - 2p\gamma}}{p (\ln(2 + N^{p\gamma}))^2} & \text{ if } p=d, \\[4mm] 
\displaystyle \frac Cp N^{1 - 2d\gamma} & \text{ if } 1 \leq p<d.
\end{cases}
\]

\end{theorem}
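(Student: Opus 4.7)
I would sandwich the particle system \eqref{eq:Nps_CO} between the intermediate nonlinear i.i.d.\ system \eqref{eq:NLps_CO} and the target VPFP equation \eqref{eq:VPFP} via the triangle inequality
\[
\WW_p(\mu_t^N,f_t) \leq \WW_p(\mu_t^N,\nu_t^N) + \WW_p(\nu_t^N,f_t^N) + \WW_p(f_t^N,f_t),
\]
and treat separately three qualitatively different errors: a \emph{coupling error} between two SDE systems driven by the same noise, a purely \emph{statistical error} of Fournier--Guillin type for i.i.d.\ empirical measures, and a \emph{stability error} for the regularized kinetic PDE as the cut-off is removed. It suffices to produce, for each summand, a tail bound of order $N^{-\gamma}$ compatible with the right-hand side of the claim.

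\textbf{Coupling step.} I would couple \eqref{eq:Nps_CO} and \eqref{eq:NLps_CO} synchronously, i.e.\ driven by the same Brownian motions and with common initial data, so that almost surely
\[
\WW_p^p(\mu_t^N,\nu_t^N) \leq \frac1N \sum_{i=1}^N \lt(|X_t^{i,N}-Y_t^{i,N}|^p + |V_t^{i,N}-W_t^{i,N}|^p\rt).
\]
Subtracting the two SDEs, the drift difference splits as
\[
\frac1N\sum_j [F^N_\delta(X^i-X^j)-F^N_\delta(Y^i-Y^j)] + \lt[\frac1N\sum_j F^N_\delta(Y^i-Y^j) - F^N_\delta*\rho^N(Y^i)\rt],
\]
where the first (consistency) piece is Lipschitz against the coupling with constant $\|\nabla F^N_\delta\|_\infty \ls N^{d\delta}$, and the second (fluctuation) piece is a centred sum of $N-1$ i.i.d.\ terms of sup-norm $\ls N^{(d-1)\delta}$ and hence of Bernstein size $\ls N^{-1/2+(d-1)\delta}$. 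Following the stopping-time strategy of \cite{LP}, I would introduce $\tau := \inf\{t : \lt(\frac1N\sum_i|X_t^{i,N}-Y_t^{i,N}|^p+|V_t^{i,N}-W_t^{i,N}|^p\rt)^{1/p} > N^{-\gamma}\}$, note that on $[0,\tau]$ most pairs of particles lie outside the singular zone of $F^N_\delta$ so the effective Lipschitz constant seen by the coupling is much smaller than $N^{d\delta}$, then Grönwall the pair $(|X-Y|^p,|V-W|^p)$ jointly on $[0,\tau\wedge T]$ and conclude via concentration that $\{\tau\leq T\}$ has probability at most $C_N$; the assumption $\delta<1/d$ is what reconciles the $N^{-1/2}$ fluctuation gain with the size of the singular zone.

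\textbf{Statistical and stability steps.} For $\WW_p(\nu_t^N,f_t^N)$ the variables $(Y^{i,N},W^{i,N})_{i=1,\dots,N}$ are by construction i.i.d.\ with law $f_t^N$, so the Fournier--Guillin concentration inequality for empirical measures in $\WW_p$ applies directly: the $q$-th moment hypothesis on $f_0$, propagated uniformly in $N$ to $f_t^N$ via the assumed $L^1(0,T;L^\infty)$ control on $\rho^N$, yields on the one hand the Markov-type polynomial term $N^{\frac1p(1-(1-p\gamma)(q-\e)/p)}$ (from truncating the mass beyond radius $N^\gamma$), and on the other hand the Gaussian-tailed concentration bound on the compactified part, giving exactly the three cases of $C_N$ according to whether $p>d$, $p=d$ or $p<d$. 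For $\WW_p(f_t^N,f_t)$, both $f$ and $f^N$ solve a VPFP-type equation with the same initial data, uniformly bounded spatial densities, and kernels agreeing outside the ball of radius $N^{-\delta}$; a characteristic-coupling Grönwall estimate as in \cite{LP,Hau} with drift error $\|(F-F^N_\delta)*\rho\|_\infty \ls N^{-\delta}$ then gives $\WW_p(f_t^N,f_t) \ls N^{-\delta} = o(N^{-\gamma})$ since $\gamma<\delta$.

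\textbf{Main obstacle.} The delicate step is the coupling. The singular-looking kernel $F^N_\delta$ has Lipschitz constant blowing up like $N^{d\delta}$, so a naive deterministic Grönwall on moments is hopeless; one has to exploit simultaneously the mean-zero structure of the fluctuation term, its Bernstein concentration, and the stopping-time argument guaranteeing that only a controlled fraction of pairs ever meet the singular zone. The degeneracy of the diffusion (noise acts only on velocities) further complicates matters, since the position coupling error is not directly driven by the Brownian motions and can only be controlled through the velocity error via an additional time integration; this forces the Grönwall estimate to be run simultaneously on $|X-Y|^p$ and $|V-W|^p$, and explains the final balance between $\gamma$, the cut-off parameter $\delta$, and the dimension $d$.
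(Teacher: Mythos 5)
Your overall architecture --- triangle inequality
\[
\WW_p(\mu_t^N,f_t) \leq \WW_p(\mu_t^N,\nu_t^N)+\WW_p(\nu_t^N,f_t^N)+\WW_p(f_t^N,f_t),
\]
synchronous coupling with a stopping-time/Bernstein argument for the first term, Fournier--Guillin concentration for the second, and a PDE stability estimate for the third --- coincides with the paper's. The coupling and statistical steps are described correctly at the level of generality you give.

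The gap is in the stability step, and you also misidentify where the main technical novelty of the paper sits. You assert that ``a characteristic-coupling Gr\"onwall estimate as in \cite{LP,Hau} with drift error $\|(F-F^N_\delta)*\rho\|_\infty\lesssim N^{-\delta}$ then gives $\WW_p(f_t^N,f_t)\lesssim N^{-\delta}$.'' But to close a Gr\"onwall inequality you need a Lipschitz-type bound on the regularized drift, and the only one available is $\|\nabla F^N_\delta*\rho\|_\infty\leq C\ln N\,\|\rho\|_{1,\infty}$ (Lemma \ref{lem_useful}), which blows up logarithmically in $N$. A naive Gr\"onwall on $|Y_t^N-Y_t|+|W_t^N-W_t|$ therefore produces a multiplicative factor $e^{C T\ln N}=N^{CT}$, which destroys the $N^{-\delta}$ gain entirely. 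In \cite{LP} this is not an issue because the dynamics are deterministic (no velocity diffusion), so the error can be controlled in $\WW_\infty$ on compactly supported characteristics; in the present degenerate-diffusion setting the estimate must be run in $\WW_p$ with $p<\infty$, and the log-Lipschitz blow-up genuinely bites. The paper's resolution is the weighted metric $G_N=\sqrt{\ln N}\,|X-Y|+|V-W|$ of Proposition \ref{lem:log_lip}: since $|X-Y|\leq G_N/\sqrt{\ln N}$, the $\ln N$ Lipschitz constant is converted into an effective $\sqrt{\ln N}$, and Gr\"onwall then yields $\WW_p(f_t^N,f_t)\leq C N^{-\delta}\exp(C\sqrt{\ln N})$ (Proposition \ref{prop:Cau}). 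Because $\exp(\sqrt{\ln N})=N^{1/\sqrt{\ln N}}=o(N^{\epsilon})$ for every $\epsilon>0$, this is still $o(N^{-\gamma})$ for any $\gamma<\delta$, which is what the theorem needs; but the sub-polynomial correction is unavoidable and the weighting that produces it is the paper's key new lemma. The same $\sqrt{\ln N}$ weight also appears in the functional $J^N$ governing the coupling step (Lemma \ref{lem_j}); you omit it there as well, which would again degrade the Gr\"onwall exponent. Your ``main obstacle'' paragraph singles out the particle--particle coupling, which is indeed delicate but is imported essentially unchanged from \cite{LP}; the genuinely new difficulty in this paper is the stability step you treat as routine.
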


Let us briefly explain the strategy of the proof of this theorem.
\begin{itemize}
\item First, using the techniques introduced in \cite{LP}, one can estimate the probability that the error between the empirical measure $\mu_t^N$ associated to the particle system \eqref{par_VP} and the empirical measure $\nu_t^N$ associated to the nonlinear independent particle system with cut-off \eqref{eq:Nps_CO}, exceeds the threshold $N^{-\gamma}$. In this way, it is shown that this probability decreases faster than any negative power of $N$ (see Lemma \ref{lem_j}). 
\item Then using a concentration inequality of \cite{FG}, one can obtain bounds on the probability that the error between $\nu_t^N$ empirical measure associated to i.i.d. random variables and their law $f_t^N$ solution at time $t$ to equation \eqref{reg_VPFP}, exceeds the threshold $N^{-\gamma}$. The main results in \cite{FG} provide an optimal rate of convergence which is of order of some negative power of $N$. (See Proposition \ref{prop_fg}). 

\item Finally, we show that the error between the solution to equation \eqref{reg_VPFP} and the solution to equation \eqref{eq:VPFP} never exceeds the threshold $N^{-\gamma}$, for $N$ large enough. Contrary to \cite[Proposition 9.1]{LP}, this error has to be estimated in $\WW_p$ distance with $1\leq p<\infty$ due to the presence of noise in velocity (See Proposition \ref{lem:log_lip}).
\end{itemize}

Our main contributions in comparison to the noiseless case treated in \cite{LP} are the following. On the one hand, we provide a convergence result of the solution to equation \eqref{reg_VPFP} to the solution to equation \eqref{eq:VPFP}, in $\WW_p$ metric for $p\in [1,\infty)$, which is crucial since the support of $f^N_t$ and $f_t$ are not compactly supported in our present case. On the other hand, we provide a well-posedness result for equation \eqref{eq:VPFP}. This requires to show that the spatial density $\rho_t$ of solution to this equation lies in $L^1(0,T;L^\infty(\R^d))$. For the case without diffusion, i.e., Vlasov-Poisson system, characteristic methods can be used to get a uniform bound on the spatial density, which leads to a uniqueness of solutions, under suitable assumptions on the initial density \cite{LP1,Loe,Pal}. However, the presence of diffusion makes it more complicated. In \cite{PS}, an uniform-in-time $L^\infty$-bound of the spatial density is obtained by means of the stochastic characteristic method under the assumptions on compactly supported initial density $f_0$ in velocity. We also provide a simple proof of the local-in-time $L^\infty$ propagation by employing Feynman-Kac's formula assuming only that the initial data has a polynomial decay in velocity (see Lemma \ref{lem:Linfbound} together with Theorem \ref{thm:ex-uniq}). Notice that obtaining the bound estimate of $\rho_t$ in $L^\infty(\R^d)$ is equivalent to the one for $\|\rho_t^N\|_\infty$ in $N$. 

The rest of this paper is organized as follows. In Section 2, we deal with some preliminary materials introduced in \cite{LP}. Section 3 contains the key new estimate on the $\WW_p$ stability between of the solutions of the VPFP equation with and without cut-off. Section 4 shows the well posedness of solution to the VPFP equation with the assumed regularity in Theorem \ref{thm:PropChao}, i.e., the assumed $L^\infty$ bound on the spatial density. Finally, Section 5 contains generalizations of this result for less singular kernels.

%
%

\section{Preliminaries}
In this section, we provide estimates for the force fields in the equations \eqref{eq:VPFP} and \eqref{reg_VPFP} in $L^1(0,T;L^\infty(\R^d))$ and the $q$-th moment estimate for the solutions of that. We also recall several useful estimates on the force fields whose proofs can be found in \cite{LP}.

Under the assumptions on the spatial densities $\rho_t$ and $\rho_t^N$ in Theorem \ref{thm:PropChao}, we can easily find 
\[
\int_0^T \|F * \rho_t\|_\infty \,dt < \infty\quad \mbox{and} \quad \sup_{N\in \mathbb{N}}\int_0^T \|F^N_\delta * \rho^N_t\|_{\infty}\,dt <\infty,
\]
due to $\|F * \rho_t \|_\infty \leq C\|\rho_t\|_{1,\infty}$ and $\|F^N_\delta * \rho_t^N \|_\infty \leq C\|\rho_t^N\|_{1,\infty}$ with $C>0$ independent of $N$. These estimates together with straightforward computations yield that for some $q \geq 2$
$$\begin{aligned}
\frac{d}{dt}\int_{\R^{2d}} &(|x|^q + |v|^q) f\,dxdv \cr
&= q\int_{\R^{2d}} |x|^{q-2}x \cdot v f\,dxdv + q\int_{\R^{2d}} |v|^{q-2}v \cdot (F * \rho) f\,dxdv + \sigma q(q-2+d)\int_{\R^{2d}} |v|^{q-2} f\,dxdv\cr
&\leq C + C\int_{\R^{2d}} |x|^q f\,dxdv + C(1 + \|\rho_t\|_{1,\infty})\int_{\R^{2d}}|v|^q f\,dxdv.
\end{aligned}$$
Thus, we obtain
\[
\sup_{t \in [0,T]} \int_{\R^{2d}}(|x|^q+|v|^q)f_t \,dxdv \leq C\sup_{0 \leq t \leq T} \int_{\R^{2d}}(|x|^q+|v|^q)f_0 \,dxdv,
\]
and similarly, we also have
\[
\sup_{N\in \mathbb{N}} \sup_{t\in[0,T]} \int_{\R^{2d}}(|x|^q+|v|^q)f^N_t\,dxdv \leq C\sup_{0 \leq t \leq T} \int_{\R^{2d}}(|x|^q+|v|^q)f_0 \,dxdv,
\]
where $C > 0$ is independent of $N$. 

Let us now recall the interaction force with a cut-off:
\[
F^N_\delta(x) := \frac{x}{(|x| \vee N^{-\delta})^d},
\]
and define
$l_\delta^N(x)$ by
\[
l_\delta^N(x) := 
\left\{ \begin{array}{ll}
\displaystyle \frac{1}{|x|^d} & \mbox{if } |x| \geq dN^{-\delta},\\[4mm]
N^{d\delta} &  \mbox{otherwise}.
\end{array} \right.
\]
We drop the subscript $\delta$ in $F^N_\delta$ and $l^N_\delta$ for notational simplicity in the rest of paper, i.e., $F^N_\delta = F^N$ and $l^N_\delta = l^N$.
\begin{lemma}\label{lem_useful} Let $d > 1$ be given.

1. There exists a constant $C$, which depends only on $d$, such that 
\[
|F^N(x) - F^N(x+z)| \leq C l^N(x) |z|,
\]
for any $x,z \in \R^d$ with $|z|\leq (d-1)N^{-\delta}$.

2. There exists a constant $C>0$ independent of $N$ such that
\[
\|l^N * \rho\|_\infty \leq C\ln N \|\rho\|_{1,\infty} \quad \mbox{and} \quad \|\nabla F^N * \rho\|_\infty \leq C\ln N \|\rho\|_{1,\infty}.
\]
\end{lemma}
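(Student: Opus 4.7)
The plan is to handle the two assertions by case analysis around the cutoff scale $N^{-\delta}$, separating the regime where $x$ sits well outside the cutoff ball, so that $F^N$ reduces to the Newtonian kernel and can be differentiated with sharp $|y|^{-d}$ bounds, from the regime where $x$ lies inside or close to it, where $F^N$ is only globally Lipschitz with constant $CN^{d\delta}$. The logarithmic factor in the second assertion will arise from integrating $r^{-1}$ over the annulus $dN^{-\delta} \leq |x-y| \leq 1$.

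For the first assertion, when $|x| \geq dN^{-\delta}$ the constraint $|z| \leq (d-1)N^{-\delta}$ gives $|x+tz| \geq |x| - t|z| \geq |x|/d \geq N^{-\delta}$ for every $t \in [0,1]$, so the whole segment lies in the smooth region $\{|y| \geq N^{-\delta}\}$ where $F^N(y) = y/|y|^d$ with $|\nabla F^N(y)| \leq C|y|^{-d}$. Integrating along the segment then yields
\[
|F^N(x+z) - F^N(x)| \leq |z| \sup_{t \in [0,1]} |\nabla F^N(x+tz)| \leq \frac{Cd^d\,|z|}{|x|^d} = C d^d\, l^N(x)\, |z|.
\]
When $|x| < dN^{-\delta}$ one has $l^N(x) = N^{d\delta}$, so the claim reduces to the global bound $|F^N(x+z) - F^N(x)| \leq CN^{d\delta}|z|$. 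I would prove this by noting that $F^N$ is continuous across the sphere $|y| = N^{-\delta}$ (both pieces reduce to $yN^{d\delta}$ there), that $\nabla F^N(y) = N^{d\delta}I$ on $\{|y| < N^{-\delta}\}$, and that $|\nabla F^N(y)| \leq C|y|^{-d} \leq CN^{d\delta}$ on $\{|y| > N^{-\delta}\}$, so that $F^N$ is piecewise $C^1$ with uniform gradient bound $CN^{d\delta}$ and hence globally Lipschitz with that constant.

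For the second assertion, I would split the convolution into three pieces according to the distance $|x-y|$:
\[
(l^N*\rho)(x) = \int_{|x-y| < dN^{-\delta}} N^{d\delta}\rho(y)\,dy + \int_{dN^{-\delta} \leq |x-y| \leq 1} \frac{\rho(y)}{|x-y|^d}\,dy + \int_{|x-y| > 1} \frac{\rho(y)}{|x-y|^d}\,dy.
\]
The first term is bounded by $c_d d^d \|\rho\|_\infty$, the third by $\|\rho\|_1$, and the middle one, after passing to polar coordinates, by $C\|\rho\|_\infty \int_{dN^{-\delta}}^1 r^{-1}\,dr \leq C(\ln N)\|\rho\|_\infty$, producing the logarithm. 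Summing gives $\|l^N*\rho\|_\infty \leq C(\ln N)\|\rho\|_{1,\infty}$. The corresponding bound for $\nabla F^N*\rho$ follows from the same computation once one verifies the pointwise estimate $|\nabla F^N(y)| \leq Cl^N(y)$ a.e.: both sides are of order $N^{d\delta}$ on $\{|y| < N^{-\delta}\}$, both are of order $|y|^{-d}$ on $\{|y| \geq dN^{-\delta}\}$, and on the thin annulus $\{N^{-\delta} < |y| < dN^{-\delta}\}$ one uses $|\nabla F^N(y)| \leq C|y|^{-d} \leq CN^{d\delta} = Cl^N(y)$. The only subtle point in the argument is that the dimension $d$ is exactly the factor needed in the threshold defining $l^N$ to keep the segment $[x, x+z]$ in Part~1 bounded below by $|x|/d$, producing the clean matching $|x+tz|^{-d} \leq d^d l^N(x)$ along the entire segment.
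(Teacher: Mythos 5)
Your proof is correct and follows the standard approach — segment integration after checking that the choice of threshold $dN^{-\delta}$ in $l^N$ keeps the whole segment $[x,x+z]$ bounded below by $|x|/d$, plus a three-way split of the convolution by distance — which is exactly the argument in Lemmas~6.1 and 6.3 of Lazarovici--Pickl that the paper cites for this lemma. The only thing worth flagging (a defect of the statement rather than of your proof) is that the bound $C\ln N\|\rho\|_{1,\infty}$ is vacuous for $N=1$ and the middle annulus is empty when $dN^{-\delta}>1$, so one should tacitly assume $N$ large enough, as the rest of the paper does (e.g.\ $N\geq e$).
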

\begin{proof}
See Lemmas 6.1 and 6.3 of \cite{LP}.
\end{proof}

Then we recast here some law of large number like estimates. For $\kappa, \delta > 0$, we set $h:\R^d \to \R^d$ such that
\bq\label{eq_h}
|h(x)|\leq c_0 (N^{\kappa\delta} \wedge |x|^{-\kappa}).
\eq

\begin{lemma}\label{lem:LLN}
Let $(Y_1,\cdots,Y_N)$ be i.i.d. random variables of law $\rho\in L^{\infty}(\R^d)$, and define the associated empirical measure $\rho_N=\frac{1}{N}\sum_{i=1}^N\delta_{Y_i}$. Suppose that $0 < \e:=2\kappa\delta+(1-d\delta)\mb_{1> d\delta} < 2$. Then, for all integer $m>\frac{1}{2-\e}$, there exist $\gamma_m,C_m>0$ such that 
\[
\mathbb{E}\lt[\sup_{1 \leq i \leq N}  \left | h*\rho_N(Y_i)-h*\rho(Y_i) \right |^{2m}  \rt]\leq C_m N^{-\gamma_m},
\]
where $\gamma_m=(2-\e)m-1$.
\end{lemma}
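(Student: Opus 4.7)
The plan is to dominate the supremum by a single $L^{2m}$ moment via exchangeability, and then to bound that moment by a Rosenthal-type inequality for sums of i.i.d.\ centered bounded random variables.

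First, by symmetry of the $Y_i$'s,
\[
\mathbb{E}\!\left[\sup_{1\leq i \leq N}|h*\rho_N(Y_i)-h*\rho(Y_i)|^{2m}\right]\leq N\,\mathbb{E}\!\left[|h*\rho_N(Y_1)-h*\rho(Y_1)|^{2m}\right],
\]
so it suffices to establish $\mathbb{E}[|X_1|^{2m}]\leq C_m N^{-(2-\e)m}$ for $X_1:=h*\rho_N(Y_1)-h*\rho(Y_1)$; the prefactor $N$ then produces exactly $N^{-\gamma_m}$ with $\gamma_m=(2-\e)m-1$.

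Second, I would split off the diagonal $j=1$ term of the empirical convolution,
\[
X_1=\frac{h(0)-h*\rho(Y_1)}{N}+\frac{1}{N}\sum_{j=2}^{N}\bigl(h(Y_1-Y_j)-h*\rho(Y_1)\bigr)=:A_1+B_1,
\]
and dispose of $A_1$ deterministically via $|A_1|\leq 2c_0 N^{\kappa\delta-1}$, which gives $\mathbb{E}|A_1|^{2m}\leq CN^{(2\kappa\delta-2)m}\leq CN^{-(2-\e)m}$ since $\e\geq 2\kappa\delta$ by the definition of $\e$. For $B_1$, conditioning on $Y_1=y$ makes the summands $Z_j:=h(y-Y_j)-h*\rho(y)$ i.i.d., centered, and bounded by $2c_0 N^{\kappa\delta}$. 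Splitting the integrals $\int|h|^k\rho(y-\cdot)\,du$ at the cut-off scale $|u|=N^{-\delta}$ and at an $N$-independent radius, and using $\rho\in L^1\cap L^\infty$, I would establish
\[
\mathbb{E}[Z_j^2\mid Y_1=y] \leq CN^{\e-1},\qquad \mathbb{E}[|Z_j|^{2m}\mid Y_1=y]\leq CN^{(2m-2)\kappa\delta+\e-1}.
\]
The two cases $d\delta\geq 1$ (where $\e=2\kappa\delta$) and $d\delta<1$ (where $\e=2\kappa\delta+1-d\delta$) appear naturally at this point: the extra $(1-d\delta)$ arises because the near-singular volume $|\{|u|\leq N^{-\delta}\}|\sim N^{-d\delta}$ becomes the dominant contribution in that regime.

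Third, Rosenthal's inequality for the average of $N-1$ i.i.d.\ centered bounded random variables yields
\[
\mathbb{E}[|B_1|^{2m}\mid Y_1]\leq C_m\bigl(N^{-m}\mathbb{E}[Z_j^2\mid Y_1]^m+N^{1-2m}\mathbb{E}[|Z_j|^{2m}\mid Y_1]\bigr),
\]
and inserting the two bounds above, both terms collapse to $C_m N^{-(2-\e)m}$ after a short algebraic check that uses the definition of $\e$. Taking expectation over $Y_1$, combining with the $A_1$ estimate and multiplying by $N$ from the exchangeability step produces the claimed $N^{-\gamma_m}$; the hypothesis $m>1/(2-\e)$ is exactly what guarantees $\gamma_m>0$ so that the bound is non-trivial.

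The main obstacle I anticipate is the variance estimate $\mathbb{E}[Z_j^2\mid Y_1=y]\leq CN^{\e-1}$ in the regime $2\kappa\leq d$, where the naive bound $\sigma_y^2\leq\|\rho\|_\infty\|h\|_{L^2}^2$ is useless because $h\notin L^2(\R^d)$. The resolution is the two-scale decomposition sketched above: the near-singularity contribution $N^{(2\kappa-d)\delta}$ (coming from $|u|\leq N^{-\delta}$) and the intermediate contribution controlled by $\|\rho\|_\infty$ or by $\rho\in L^1$ beyond a constant radius combine to give $CN^{\e-1}$, with the precise form of the exponent $\e$ engineered precisely for this purpose. This is the integral estimate already worked out in \cite{LP}, which is the source we are invoking.
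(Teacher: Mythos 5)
Your approach---exchangeability to remove the supremum, splitting off the $j=1$ diagonal term, conditioning on $Y_1$, Rosenthal's inequality, and a two-scale moment estimate at radius $N^{-\delta}$---is the natural one and presumably matches Proposition 7.2 of \cite{LP}, which is all the paper offers as a proof. The Rosenthal bookkeeping is also correct: both terms reduce to $N^{(\e-2)m}$ once you have the two moment bounds together with the elementary inequality $2\kappa\delta\leq\e$.

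The genuine gap is in the asserted variance estimate $\mathbb{E}\bigl[Z_j^2\mid Y_1=y\bigr]\leq CN^{\e-1}$, which holds only when $\e\geq 1$. Your two-scale split actually gives $\sigma_y^2\leq C\bigl(N^{(2\kappa-d)\delta}+1\bigr)$ (with an extra $\ln N$ at the boundary $2\kappa=d$): the power comes from $\{|u|\leq N^{-\delta}\}$ and, when $2\kappa>d$, from the annulus $N^{-\delta}\leq|u|\leq 1$; the ``$+1$'' is unavoidable, coming from $\|\rho\|_1$ on $\{|u|>1\}$ and from $\|\rho\|_\infty$ on the intermediate annulus when $2\kappa<d$. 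This is $\leq CN^{\e-1}$ if and only if $\e\geq 1$; when $\e<1$ the constant is not dominated by $N^{\e-1}\to 0$. Your closing paragraph glosses over this: when $2\kappa<d$ and $d\delta<1$ one has $\e-1=(2\kappa-d)\delta<0$, and the near-singularity and far contributions do \emph{not} ``combine to give $CN^{\e-1}$.'' In fact the lemma's claimed rate cannot hold at all for $\e<1$ and $m$ large: take $h$ a fixed bounded kernel (so $\kappa=0$ and $\e=1-d\delta<1$); then, conditionally on $Y_1$, $B_1$ is a normalized sum of $N-1$ i.i.d.\ centered bounded variables with order-one variance, so $\mathbb{E}\bigl[|X_1|^{2m}\bigr]$ is of order $N^{-m}$, which exceeds $N^{-\gamma_m}=N^{1-(2-\e)m}$ as soon as $m>1/(1-\e)$. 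There is therefore a hidden hypothesis $\e\geq1$ (equivalently $2\kappa\geq d$ in the regime $d\delta<1$), either omitted in the paper's transcription or implicit in \cite{LP}. This is harmless for the applications here, since $\kappa\in\{d-1,d\}$, $d\geq2$ and $\delta<1/d$ force $\e\geq1$; but your proposal should record this restriction rather than assert that the form of $\e$ is ``engineered'' to make the variance bound go through for all $\e\in(0,2)$.
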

\begin{proof}
See the proof of Proposition 7.2 of \cite{LP}.
\end{proof}
We conclude this section by recalling some concentration inequalities in the proposition below whose proof can be found in \cite[Theorem 2]{FG}.
\begin{proposition}\label{prop_fg}
	Let $N \geq 1$ and $(Y_1,\cdots,Y_N)$ be $N$ independent identically distributed of law $\rho \in \pp_q(\R^{2d})$ for some $q > 0$. Let $\rho_N := \frac1N \sum_{i=1}^N\delta_{Y_i}$. Then, for any $p\in (0,q/2),\e\in (0,q)$, there exist constants $C,c>0$ depending only on $d,p,q,\e$ and the $q$-order moment bound of $\rho$ such that 
	\[
	\mathbb{P}\bigl(\WW_p^p(\rho_N,\rho)\geq x \bigr)\leq CN (Nx)^{-\frac{q-\e}{p}}+a(N,x)\mb_{x\leq 1}, \quad \mbox{for any} \quad x > 0,
	\]
	where $a(N,x)$ is defined by
	\[
	-\log a(N,x)=\begin{cases}
	cNx^2 & \text{ if } p>d, \\ 
	cN(x/\ln(2+1/x))^2 & \text{ if } p=d, \\ 
	cNx^{2d/p} & \text{ if } 1 \leq p<d.
	\end{cases}
	\]
\end{proposition}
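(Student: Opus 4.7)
The plan is to follow the dyadic-partition strategy of Fournier--Guillin. The core observation is that a Wasserstein distance between two probability measures supported on a bounded set is controlled from above by a geometric-weighted sum of their mass discrepancies over a nested dyadic grid, and these discrepancies are sums of i.i.d.\ Bernoulli indicators that concentrate via Bernstein's inequality. The $q$-moment assumption on $\rho$ lets us trade the bounded-support requirement against a polynomial moment tail, yielding the two-term structure of the bound.

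The first step is truncation. Fix $R \geq 1$ and split the mass of $\rho$ into its restriction to the ball $B_R \subset \R^{2d}$ and its complement. Markov's inequality gives $\rho(B_R^c) \leq M_q R^{-q}$, where $M_q$ is the $q$-moment bound. The transport cost incurred by the samples $Y_i$ falling outside $B_R$ is dominated by $(1/N)\sum_i |Y_i|^p \mb_{|Y_i| > R}$ plus the analogous tail moment of $\rho$, each of order $R^{p-q}$ in expectation. A Markov estimate on the count of far-away samples, optimised in $R$ against the prescribed threshold $x$, produces exactly the moment-tail term $CN(Nx)^{-(q-\e)/p}$ appearing in the statement, and it dominates the bound in the regime $x > 1$.

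The second step is the dyadic decomposition of $B_R$. Partition $B_R$ into a nested family $(\mathcal{F}_n)_{n \geq 0}$ of cubes with $\#\mathcal{F}_n \asymp 2^{2dn}$ and diameter $\asymp R\,2^{-n}$. Kantorovich duality applied to $1$-Lipschitz functions that are piecewise constant on $\mathcal{F}_n$ yields the deterministic estimate
\bq\label{eq:dyadicFG}
\WW_p^p(\rho_N, \rho) \ls R^p \sum_{n \geq 0} 2^{-np} \sum_{F \in \mathcal{F}_n} |\rho_N(F) - \rho(F)|.
\eq
Each $\rho_N(F)$ is a normalised sum of $\mathrm{Bernoulli}(\rho(F))$ variables, so Bernstein's inequality gives
\[
\IP\bigl(|\rho_N(F) - \rho(F)| > t\bigr) \leq 2\exp\lt(-\frac{cNt^2}{\rho(F) + t/3}\rt),
\]
which has two regimes: a variance-dominated (Gaussian) tail when $t \ll \rho(F)$ and a large-deviation (Poissonian) tail when $t \gg \rho(F)$.

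The final step combines these via a Chernoff-type computation on \eqref{eq:dyadicFG} rather than a crude union bound over the $\sum_n 2^{2dn}$ cells. Writing the right-hand side as a weighted sum of independent increments, one estimates its exponential moment level by level, optimising both the truncation level and the per-level threshold. The trade-off between the cardinality $2^{2dn}$ and the weight $2^{-np}$ generates the three regimes of $a(N,x)$: when $p > d$ the weights dominate, the series converges geometrically, and the Gaussian part of Bernstein yields $\exp(-cNx^2)$; when $p < d$ the cardinality dominates and one must stop at a level $n^\star$ with $2^{n^\star d} \asymp N$, which forces the fractional exponent $2d/p$; the case $p = d$ is the logarithmic borderline producing the $\log(2 + 1/x)$ factor. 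The main obstacle is extracting the \emph{sharp} scaling in $a(N,x)$: a naive union bound over the $2^{2dn}$ cubes in each level loses a power of $n$ in the exponent, so one must carry the variance-sensitive form of Bernstein through the exponential-moment calculation and track the Gaussian versus Poissonian regime of each cell separately before summing over $n$.
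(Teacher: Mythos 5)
The paper does not actually prove this proposition: it is quoted verbatim from \cite[Theorem 2]{FG} (with ambient dimension $2d$, which is why the case split is at $p=d$ and the exponent is $2d/p$), and the ``proof'' in the text is just that citation. Your sketch correctly reconstructs the architecture of the Fournier--Guillin argument --- moment truncation producing the polynomial term $CN(Nx)^{-(q-\e)/p}$ under $p<q/2$, a nested dyadic partition controlling $\WW_p^p$ by weighted mass discrepancies, and a concentration estimate whose level-by-level optimization generates the three regimes of $a(N,x)$ --- so in that sense you are taking the same route the paper relies on.

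Two points deserve correction or completion. First, the deterministic estimate \eqref{eq:dyadicFG} is \emph{not} obtained by ``Kantorovich duality applied to $1$-Lipschitz functions'': duality against Lipschitz test functions characterizes only $\WW_1$, and for general $p$ Fournier--Guillin prove this bound by an explicit recursive coupling (match the common mass of $\rho_N$ and $\rho$ inside each cube of $\mathcal{F}_n$, transport the unmatched excess at cost at most the diameter of the parent cube at level $n-1$, and iterate). Second, the concentration step is where all the quantitative content lives and your sketch leaves it unexecuted; moreover the route in \cite{FG} is not per-cell Bernstein plus exponential moments but rather the identity $\sum_{F\in\mathcal{F}_n}|\rho_N(F)-\rho(F)|=\sup_{s\in\{-1,1\}^{\#\mathcal{F}_n}}\frac1N\sum_i s(Y_i)-\rho(s)$, a Hoeffding bound for each fixed sign pattern, a union bound over the $2^{\#\mathcal{F}_n}$ patterns (this is what avoids losing logarithms, since $\log 2^{K}\asymp K$ is exactly offset by the per-level threshold choice), and then an optimization of the thresholds $x_n$ with $\sum_n x_n\leq x$ together with a cutoff level $2^{n^\star(2d)}\asymp N$ when $p<d$. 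If you intend your Chernoff-on-the-weighted-sum variant to replace this, you would need to actually carry out the exponential-moment computation and verify it reproduces the exponents $2$, $2$ with the $\ln(2+1/x)$ correction, and $2d/p$; as written this is a plausible plan rather than a proof.
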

%
%
%
%
\section{Propagation of Chaos: Proof of Theorem \ref{thm:PropChao}}
In this section, we provide the details of the proof of Theorem \ref{thm:PropChao}. We begin with the following Lipschitz estimates on the force fields $F$ and $F^N$.
\begin{lemma}\label{lem_lip} For $x,y \in \R^d$, we get
	\[
	|F(x) - F(y)| \leq C_0|x-y|\lt( \frac{1}{|x|^d} + \frac{1}{|y|^d}\rt),
	\]
	and
	\[
	|F^N(x) - F^N(y)| \leq C_0|x-y|\lt( \frac{1}{\lt( |x| \vee N^{-\delta} \rt)^d} + \frac{1}{\lt( |y| \vee N^{-\delta} \rt)^d}\rt),
	\]
	where $C_0$ is a positive constant depending only on $d$.
\end{lemma}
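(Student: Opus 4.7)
The plan is to prove both inequalities by the same case-splitting argument, carrying out the details for $F$ and then observing that replacing $|\cdot|$ by $r(\cdot) := |\cdot| \vee N^{-\delta}$ gives the $F^N$ version.

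For $F(x)=x/|x|^d$, a direct computation gives $|\nabla F(z)| \leq C_d/|z|^d$ away from the origin. Without loss of generality assume $|x| \leq |y|$, and split on whether $|x-y|$ is small or large compared to $|x|$. In the \emph{close case} $|x-y| \leq |x|/2$, every point $z$ on the segment $[x,y]$ satisfies $|z| \geq |x| - |x-y| \geq |x|/2$, so the mean value theorem yields
\[
|F(x)-F(y)| \leq C_d|x-y|\sup_{z\in[x,y]} \frac{1}{|z|^d} \leq \frac{2^d C_d |x-y|}{|x|^d} \leq C_0|x-y|\left(\frac{1}{|x|^d}+\frac{1}{|y|^d}\right).
\]
In the \emph{far case} $|x-y| > |x|/2$, use the triangle inequality $|F(x)-F(y)| \leq 1/|x|^{d-1}+1/|y|^{d-1}$. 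Since $|x| \leq 2|x-y|$, the first term is bounded by $2|x-y|/|x|^d$; since $|y| \leq |x|+|x-y| \leq 3|x-y|$, the second term is bounded by $3|x-y|/|y|^d$. Summing yields the claim with $C_0=3$.

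For $F^N$, the same split works with $r(x):=|x|\vee N^{-\delta}$ in place of $|x|$, using three elementary observations. First, $F^N$ is piecewise smooth with $|\nabla F^N(z)| \leq C_d/r(z)^d$: on $B_{N^{-\delta}}$ one has $F^N(z)=N^{d\delta}z$ with gradient $N^{d\delta}I$, and on the complement $F^N=F$. Second, along the segment $z(t)=(1-t)x+ty$ with $|x-y|\leq r(x)/2$, one has $r(z(t)) \geq r(x)/2$, by considering separately the case $|x| \geq N^{-\delta}$ (where $|z(t)| \geq |x|/2 = r(x)/2$) and the case $|x| < N^{-\delta}$ (where $r(z(t)) \geq N^{-\delta}=r(x)$). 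Third, $|F^N(w)|=|w|/r(w)^d \leq 1/r(w)^{d-1}$ for every $w$. Repeating the two-case argument verbatim with these substitutions gives the $F^N$ bound.

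The only technical wrinkle is the jump of $\nabla F^N$ across the sphere $\{|z|=N^{-\delta}\}$ in the mean value step; this is harmless because $F^N$ is globally Lipschitz, so one may either approximate by a smoothing of $F^N$ or split the segment at its (at most two) intersections with this sphere and apply the smooth bound on each piece. No step is genuinely delicate — the lemma is a direct computation once one commits to the close/far dichotomy.
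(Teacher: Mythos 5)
Your proof is correct, but it takes a genuinely different route from the paper's. The paper proves the $F^N$ estimate by splitting into three cases according to whether $x$ and $y$ lie inside or outside the ball $B(0,N^{-\delta})$: both outside (invoke the gradient bound for $F$, since $F^N=F$ there), both inside ($F^N$ is linear, compute directly), and the mixed case (insert the intermediate point $\tilde x$ where the segment $[x,y]$ meets the sphere, apply the first two cases on each subsegment). You instead use a single "close/far" dichotomy comparing $|x-y|$ to $r(x):=|x|\vee N^{-\delta}$, together with the observation that all the estimates ($|\nabla F^N(z)|\lesssim r(z)^{-d}$, $|F^N(w)|\leq r(w)^{-(d-1)}$, and $r(z)\geq r(x)/2$ along the segment in the close case) survive verbatim the substitution $|\cdot|\mapsto r(\cdot)$. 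Your decomposition has the advantage of being uniform: it handles $F$ and $F^N$ by the same two cases, gives the first inequality (which the paper dismisses as ``straightforward'') a full argument, and makes explicit the close/far reasoning that the paper's case (i) tacitly relies on when it writes $|F(x)-F(y)|\leq\left(|\nabla F(x)|\vee|\nabla F(y)|\right)|x-y|$ — a step that, taken literally, ignores that the segment may pass near the origin and so needs exactly the dichotomy you spell out. The paper's decomposition, on the other hand, keeps the cut-off structure fully visible at every step and makes case (ii) a one-line computation. You also correctly identify and dispatch the only genuine subtlety in your route, namely the discontinuity of $\nabla F^N$ on the sphere $\{|z|=N^{-\delta}\}$, which is handled by mollification or by splitting the segment at its intersection points with that sphere.
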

\begin{proof} The first assertion is straightforward. For the proof of the second one, we consider three cases as follows. \newline
	
	(i) $|x|, |y| \geq N^{-\delta}$: In this case, we get $F^N(x) = F(x)$ and thus it is clear to obtain
	\[
	|F^N(x) - F^N(y)| = |F(x) - F(y)| \leq \lt( |\nabla F(x)| \vee |\nabla F(y)| \rt)|x-y|.
	\]
	Note that
	\[
	\nabla F(x) = \frac{1}{|x|^d} I_d - d\frac{x \otimes x}{|x|^{d+2}}, \quad \mbox{i.e.,} \quad |\nabla F(x)| \leq \frac{C}{|x|^d} = \frac{C}{\lt( |x| \vee N^{-\delta} \rt)^d},
	\]
	where $C > 0$ depends only on $d$. This yields
	\[
	|F^N(x) - F^N(y)| \leq \frac{C|x-y|}{\lt( |x| \wedge |y| \rt)^d} \leq C|x-y|\lt( \frac{1}{\lt( |x| \vee N^{-\delta} \rt)^d} + \frac{1}{\lt( |y| \vee N^{-\delta} \rt)^d}\rt),
	\]
	where $a \wedge b := \min \{a,b\}$. 
	
	(ii) $|x|, |y| \leq N^{-\delta}$: By definition of $F^N$, we find 
	\[
	|F^N(x) - F^N(y)| = \frac{|x-y|}{N^{-\delta d}} \leq |x-y|\lt( \frac{1}{\lt( |x| \vee N^{-\delta} \rt)^d} + \frac{1}{\lt( |y| \vee N^{-\delta} \rt)^d}\rt).
	\]
	
	(iii) $|x| < N^{-\delta} \leq |y|$ or $|y| < N^{-\delta} \leq |x|$: For $|x| < N^{-\delta} \leq |y|$, let us define $\tilde x$ as the interaction between the line segment $[x,y]$ and the ball $B(0,N^{-\delta})$. Then we get 
	\[
	|x| < |\tilde x| = N^{-\delta} \leq |y| \quad \mbox{and} \quad |x - \tilde x| + |\tilde x - y| = |x - y|.
	\]
	By employing the similar arguments as in the previous cases (i) and (ii), we have
	\[
	|F^N(x) - F^N(\tilde x)| \leq \frac{C|x-\tilde x|}{N^{-\delta d}}\quad \mbox{and} \quad |F^N(\tilde x) - F^N(y)| \leq \frac{C|\tilde x-y|}{\lt( |\tilde x| \wedge |y| \rt)^d} = \frac{C|\tilde x - y|}{|\tilde x|^d} = \frac{C|\tilde x - y|}{N^{-\delta d}}.
	\]
	This implies that for $|x| < N^{-\delta} \leq |y|$
	\[
	|F^N(x) - F^N(y)| \leq |F^N(x) - F^N(\tilde x)| + |F^N(\tilde x) - F^N(y)| \leq \frac{C|x- y|}{N^{-\delta d}} = \frac{C|x-y|}{\lt(|x| \vee N^{-\delta} \rt)^d}.
	\]
	Similarly, we have
	\[
	|F^N(x) - F^N(y)| \leq \frac{C|x-y|}{\lt(|y| \vee N^{-\delta} \rt)^d} \quad \mbox{for} \quad |y| < N^{-\delta} \leq |x|.
	\]
	Combining the above all cases, we conclude to the desired result.
\end{proof} 

Set
\[
G:=|X - Y| + |V - W| \quad \mbox{and} \quad G_N:= \sqrt{\ln N}|X - Y| + |V - W|.
\]

\begin{proposition}\label{lem:log_lip}
Let $(X,V)$ and $(Y,W)$ be two random variables of law $f_1$ and $f_2$ respectively, such that their first marginal are $\rho_1$ and $\rho_2$, respectively. Let $(\overline{X},\overline{Y})$ be an independent copy of $(X,Y)$. Suppose $\rho_1,\rho_2\in L^{\infty}(\R^d)$ and $N \geq e$. Then we have
\[
\E\lt[|F(X-\overline{X})-F(Y-\overline{Y})|G^{p-1}  \rt]\leq C\lt(\|\rho_1\|_{\infty}+\|\rho_2\|_{\infty}\rt)\E[G^p]\lt(1- \frac{1}{p}\ln^- \E[G^p] \rt)
\]
and
\[
\E\lt[|F^N(X-\overline{X})-F^N(Y-\overline{Y})| G_N^{p-1}  \rt]\leq C\sqrt{\ln N}\lt(\|\rho_1\|_{\infty}+\|\rho_2\|_{\infty}\rt)\E[G_N^p],
\]
for $p \geq 1$, where $\ln^{-}$ denotes the negative part of $\ln$ and the constant $C > 0$ depends only on $d$. 
\end{proposition}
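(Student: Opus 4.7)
The plan is to use Lemma~\ref{lem_lip} pointwise with $x=X-\overline X$ and $y=Y-\overline Y$, noting that $x-y=(X-Y)-(\overline X-\overline Y)$, and then to control the resulting expectation by truncating the singular weight at a well-chosen scale. Both inequalities follow from this same template; they differ only in the cutoff used.

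For the bound involving $F^N$, the cutoff is built into the kernel itself. Lemma~\ref{lem_lip} produces weights of the form $(|X-\overline X|\vee N^{-\delta})^{-d}+(|Y-\overline Y|\vee N^{-\delta})^{-d}$, and by Lemma~\ref{lem_useful} their convolution with either marginal density is bounded by $C\ln N\,\|\rho\|_{1,\infty}$. The $\sqrt{\ln N}$ factor in the statement appears because the definition $G_N=\sqrt{\ln N}\,|X-Y|+|V-W|$ yields $|X-Y|\leq G_N/\sqrt{\ln N}$; combining this with the $\ln N$ coming from Lemma~\ref{lem_useful} produces the ratio $\ln N/\sqrt{\ln N}=\sqrt{\ln N}$, which multiplies $\E[G_N^p]$.

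For the bound involving $F$, I introduce an auxiliary cutoff $r>0$ to be optimized and split the expectation into the "far" region $\{|X-\overline X|\wedge|Y-\overline Y|\geq r\}$ and its complement. On the far region, apply Lemma~\ref{lem_lip} and use the log-type estimate $\int_{|z|\geq r}|z|^{-d}\rho(z)\,dz\leq C\|\rho\|_\infty(1-\ln^- r)+C$ to produce a contribution of order $\|\rho\|_\infty\,\E[G^p](1-\ln^- r)$. On the complementary near region, use the pointwise bound $|F(a)-F(b)|\leq |a|^{-(d-1)}+|b|^{-(d-1)}$ together with $\int_{|z|\leq r}|z|^{-(d-1)}\rho(z)\,dz\leq C\|\rho\|_\infty r$, yielding a contribution of order $\|\rho\|_\infty\,r\,\E[G^{p-1}]$. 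Choosing $r=(\E[G^p])^{1/p}$ merges both contributions into the claimed rate $\E[G^p]\,(1-\tfrac{1}{p}\ln^-\E[G^p])$.

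The main obstacle is the careful control of the cross terms arising from the triangle inequality $|(X-Y)-(\overline X-\overline Y)|\leq |X-Y|+|\overline X-\overline Y|$: the part involving $|\overline X-\overline Y|$ does not separate cleanly because $(\overline X,\overline Y)$ has the same, possibly non-product, joint distribution as $(X,Y)$. The key idea is to exploit the moment identity $\E[|\overline X-\overline Y|^p]=\E[|X-Y|^p]\leq \E[G^p]$ coming from the independent-copy assumption, combined with H\"older's inequality in the pair $(\overline X,\overline Y)$, so that these cross terms reduce to the same order as the diagonal contributions involving $|X-Y|$ and the final bound closes.
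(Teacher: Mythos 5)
Your proposal is correct and follows essentially the same route as the paper: the same near/far splitting at a threshold $r$, the same two pointwise bounds on $F$ (the singular Lipschitz bound for the far region and $|F(x)|\leq |x|^{-(d-1)}$ for the near region), the same H\"older-plus-independent-copy device to fold the $|\overline X-\overline Y|$ cross term back into $\E[G^p]$, and the same optimizing choice $r=\E[G^p]^{1/p}$; for the cutoff kernel you likewise invoke the $\ln N$ gradient bound and the weight $\sqrt{\ln N}$ in $G_N$ exactly as the paper does.
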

\begin{proof} 
Let us denote by $\alpha_d$ the surface area of unit ball in $\R^d$.

{\it Estimate for the non cut-off force field.-} First, we notice that
\[
|F(X-\overline{X})-F(Y-\overline{Y})| \leq 
\left\{ \begin{array}{ll}
\displaystyle C \lt(|X-Y|+|\overline{X}-\overline{Y}|\rt)\lt(\frac{1}{|X-\overline{X}|^d}+\frac{1}{|Y-\overline{Y}|^d}\rt), & \\[3mm]
\displaystyle \frac{1}{|X-\overline{X}|^{d-1}}+\frac{1}{|Y-\overline{Y}|^{d-1}}. &
\end{array} \right.
\]
This yields that for any $r > 0$
$$\begin{aligned}
\E & \lt[|F(X-\overline{X})-F(Y-\overline{Y})| G^{p-1} \rt]\cr
&\quad \leq \E\lt[\lt(\frac{1}{|X-\overline{X}|^{d-1}}+\frac{1}{|Y-\overline{Y}|^{d-1}}\rt)\mb_{|X-\overline{X}|\wedge |Y-\overline{Y}|\leq r}G^{p-1} \rt]\\
&\quad +C\E\lt[\lt(|X-Y|+|\overline{X}-\overline{Y}|\rt)\lt(\frac{1}{|X-\overline{X}|^d}+\frac{1}{|Y-\overline{Y}|^d}\rt)\mb_{|X-\overline{X}|\wedge|Y-\overline{Y}|> r}G^{p-1} \rt]\\
&=: I_1+I_2.
\end{aligned}$$
$\diamond$ Estimate of $I_1$: Note that the event $\{|X-\overline{X}|\wedge |Y-\overline{Y}| \leq  r \}$ can be partitioned as
$$\begin{aligned}
&\{|X-\overline{X}|\wedge |Y-\overline{Y}| \leq r \}\cr
&\quad =\{|X-\overline{X}|\vee |Y-\overline{Y}| \leq r \} \cup \{|X-\overline{X}|> r \geq |Y-\overline{Y}| \} \cup \{|X-\overline{X}| \leq r < |Y-\overline{Y}| \}. 
\end{aligned}$$
Taking account this, we split $I_1$ into three terms:	
$$\begin{aligned}
I_1&=\E\lt[\lt(\frac{1}{|X-\overline{X}|^{d-1}}+\frac{1}{|Y-\overline{Y}|^{d-1}}\rt)\mb_{|X-\overline{X}|\vee |Y-\overline{Y}| \leq r} G^{p-1} \rt]\\
&\quad + \E\lt[\lt(\frac{1}{|X-\overline{X}|^{d-1}}+\frac{1}{|Y-\overline{Y}|^{d-1}}\rt)\mb_{|X-\overline{X}|> r \geq |Y-\overline{Y}|}  G^{p-1} \rt]\\
&\quad + \E\lt[\lt(\frac{1}{|X-\overline{X}|^{d-1}}+\frac{1}{|Y-\overline{Y}|^{d-1}}\rt)\mb_{|X-\overline{X}| \leq r < |Y-\overline{Y}|}  G^{p-1} \rt]\\
&=: I^1_1+I^2_1+I^3_1.		
\end{aligned}$$
For the estimate of $I_1^1$, we get
$$\begin{aligned}
I_1^1&=\E_{(X,V),(Y,W)}\lt[ \E_{\overline{X},\overline{Y}}\lt[ \lt(\frac{1}{|X-\overline{X}|^{d-1}}+\frac{1}{|Y-\overline{Y}|^{d-1}}\rt)\mb_{|X-\overline{X}|\vee |Y-\overline{Y}| \leq r} G^{p-1}  \rt]   \rt]\\
&\leq \E_{(X,V),(Y,W)}\lt[\lt(\int_{|X-x|\leq r} \frac{1}{|X-x|^{d-1}}\rho_1(dx)+\int_{|Y-y|\leq r} \frac{1}{|Y-y|^{d-1}}\rho_2(dy)\rt) G^{p-1}\rt]\\
&\leq \alpha_d\lt( \|\rho_1\|_{\infty}+ \|\rho_2\|_{\infty}\rt)r\E \bigl[ G^{p-1}\bigr].
\end{aligned}$$
For $I_1^2$, we obtain
\[
\begin{aligned}
I_1^2&\leq \E\lt[\frac{2}{|Y-\overline{Y}|^{d-1}}\mb_{|Y-\overline{Y}| \leq r}G^{p-1}\rt]\\
&=2\E_{(X,V),(Y,W)}\lt[\lt(\int_{|Y-y|\leq r} \frac{1}{|Y-y|^{d-1}}\rho_2(dy)\rt) G^{p-1} \rt]\\
&\leq 2\alpha_d\|\rho_2\|_{\infty}r\E\lt[G^{p-1} \rt].
\end{aligned}
\]
Similarly, we estimate $I_1^3$ as $I_1^3 \leq 2\alpha_d\|\rho_1\|_{\infty}r\E\lt[  G^{p-1} \rt]$. Combining the above estimates, we have
\[
I_1 \leq C\lt( \|\rho_1\|_{\infty}+ \|\rho_2\|_{\infty}\rt)r\E\lt[ G^{p-1} \rt] \leq C\lt( \|\rho_1\|_{\infty}+ \|\rho_2\|_{\infty}\rt)r\E\lt[ G^p \rt]^{(p-1)/p},
\]
where $C > 0$ only depends on $d$. 
 
\noindent $\diamond$ Estimate of $I_2$: 
We decompose $I_2$ as
$$\begin{aligned}
I_2&= C\E\lt[|X-Y|\lt(\frac{1}{|X-\overline{X}|^d}+\frac{1}{|Y-\overline{Y}|^d}\rt)\mb_{|X-\overline{X}|\wedge|Y-\overline{Y}|> r}G^{p-1} \rt]\\
&\quad +C\E\lt[|\overline{X}-\overline{Y}|\lt(\frac{1}{|X-\overline{X}|^d}+\frac{1}{|Y-\overline{Y}|^d}\rt)\mb_{|X-\overline{X}|\wedge|Y-\overline{Y}|> r}G^{p-1} \rt]\\
&=:I_2^1+I_2^2.
\end{aligned}$$
First we easily obtain 
$$\begin{aligned}
I_2^1&=C\E_{(X,V),(Y,W)}\lt[ G^p\E_{\overline{X},\overline{Y}}\lt[  \lt(\frac{1}{|X-\overline{X}|^d}+\frac{1}{|Y-\overline{Y}|^d}\rt)\mb_{|X-\overline{X}|\wedge|Y-\overline{Y}|> r} \rt]    \rt]\\
& \leq C\E_{(X,V),(Y,W)}\lt[ G^p \lt(\int\frac{1}{|X-x|^d}\mathbf{1}_{|X-x|\geq r}\,\rho_1(dx)+\int\frac{1}{|Y-y|^d}\mathbf{1}_{|Y-y|\geq r}\,\rho_2(dy)\rt)  \rt].
\end{aligned}$$
We then consider two cases: $r > 1$ and $0 < r \leq 1$. For $r \leq 1$, we get
$$\begin{aligned}
\int\frac{1}{|X-x|^d}\mathbf{1}_{|X-x|\geq r}\,\rho_1(dx)&=\int\frac{1}{|X-x|^d}\mathbf{1}_{|X-x|> 1}\,\rho_1(dx)+\int\frac{1}{|X-x|^d}\mathbf{1}_{|X-x| \in [r,1]}\,\rho_1(dx)\\
&\leq \|\rho_1\|_1 + \alpha_d\|\rho_1\|_{\infty}\int_r^1u^{-1}du \\
&\leq 1- \alpha_d\|\rho_1\|_{\infty}\ln^- r.
\end{aligned}$$
For the case $r > 1$, it is clear to obtain
\[
\int\frac{1}{|X-x|^d}\mathbf{1}_{|X-x|\geq r}\,\rho_1(dx) \leq \|\rho_1\|_1 = 1.
\]
This yields
\[
I_2^1\leq C\lt(\|\rho_1\|_{\infty}+\|\rho_2\|_{\infty}\rt)\E\lt[G^p\rt]\lt(1 - \ln^- r \rt),
\]
where $C > 0$ only depends on $d$. For the term $I_2^2$, we use Holder inequality to find
$$\begin{aligned}
I_2^2&=C\E\lt[|\overline{X}-\overline{Y}|\lt(\frac{1}{|X-\overline{X}|^d}+\frac{1}{|Y-\overline{Y}|^d}\rt)^{1/p}\lt(\lt(\frac{1}{|X-\overline{X}|^d}+\frac{1}{|Y-\overline{Y}|^d}\rt)^{1/p}G\rt)^{p-1} \hspace{-0.35cm}\mb_{|X-\overline{X}|\wedge|Y-\overline{Y}|> r}\rt]\\
& \leq C\E\lt[|\overline{X}-\overline{Y}|^p\lt(\frac{1}{|X-\overline{X}|^d}+\frac{1}{|Y-\overline{Y}|^d}\rt)\mb_{|X-\overline{X}|\wedge|Y-\overline{Y}|> r}\rt]^{1/p}\\
&\qquad  \qquad \times \E\lt[\lt(\frac{1}{|X-\overline{X}|^d}+\frac{1}{|Y-\overline{Y}|^d}\rt) \mb_{|X-\overline{X}|\wedge|Y-\overline{Y}|> r} G^{p} \rt]^{(p-1)/p}.
\end{aligned}$$
Similarly as before, we take the expectations on $(X,Y)$ and $(\overline{X}, \overline{Y})$ for the first and second expectations in the above, respectively, to find
$$\begin{aligned}
I_2^2&\leq C\lt(\|\rho_1\|_{\infty}+\|\rho_2\|_{\infty}\rt)\lt(1 - \ln^- r \rt) \E\lt[ |X-Y|^p\rt]^{1/p}\E\lt[G^p\rt]^{(p-1)/p}\cr
& \leq C\lt(\|\rho_1\|_{\infty}+\|\rho_2\|_{\infty}\rt)\lt(1 - \ln^- r \rt) \E\lt[ G^p\rt],
\end{aligned}$$
due to $|X- Y |\leq G$, where $C > 0$ only depends on $d$. Thus, by putting all those estimates together and using Holder's inequality, we have for any $r>0$
$$
\begin{aligned}
&\E\lt[|F(X-\overline{X})-F(Y-\overline{Y})|G^{p-1}  \rt]\cr
&\quad \leq C\lt(\|\rho_1\|_{\infty}+\|\rho_2\|_{\infty}\rt)\lt(1 - \ln^- r \rt) \E\lt[ G^p\rt] +  C\lt( \|\rho_1\|_{\infty}+ \|\rho_2\|_{\infty}\rt)r\E\lt[ G^{p} \rt]^{(p-1)/p}.
\end{aligned}
$$
Finally, we choose $r=\E\lt[G^p\rt]^{1/p}$ to obtain the desired result.

{\it Estimate for the cut-off force field.-} Note that since $F^N$ is continuous we have
$$\begin{aligned}
&\E\lt[|F^N(X-\overline{X})-F^N(Y-\overline{Y})|G_N^{p-1}  \rt]\\
&\leq C\E\lt[\lt(|X-Y|+|\overline{X}-\overline{Y}|\rt)\lt(|\nabla F^N(X-\overline{X})|+|\nabla F^N(Y-\overline{Y})|\rt)G_N^{p-1} \rt]\\
&\leq C\E\lt[|X-Y|\lt(|\nabla F^N(X-\overline{X})|+|\nabla F^N(Y-\overline{Y})|\rt)G_N^{p-1} \rt]\\
&\quad +C\E\lt[|\overline{X}-\overline{Y}|\lt(|\nabla F^N(X-\overline{X})|+|\nabla F^N(Y-\overline{Y})|\rt)G_N^{p-1} \rt]\\
&=:J_1+J_2.
\end{aligned}$$
As in the proof above, we first easily get
$$\begin{aligned}
J_1&\leq C\E_{(X,V),(Y,W)}\lt[|X-Y|G_N^{p-1}\E_{\overline{X},\overline{Y}}\lt[|\nabla F^N(X-\overline{X})|+|\nabla F^N(Y-\overline{Y})|\rt] \rt]\\
& \leq C\lt( \|\rho_1\|_{\infty}+ \|\rho_2\|_{\infty}\rt) \ln N \,\E\lt[|X-Y|G_N^{p-1}\rt]\cr
&\leq C\lt( \|\rho_1\|_{\infty}+ \|\rho_2\|_{\infty}\rt) \sqrt{\ln N} \,\E\lt[G_N^p\rt],
\end{aligned}$$
where we used Lemma \ref{lem_useful} and $\sqrt{\ln N}|X-Y| \leq G_N$.  For the term $J_2$, we again use the similar argument as before to find
$$\begin{aligned}
J_2& \leq C\E\lt[|\overline{X}-\overline{Y}|^p\lt(|\nabla F^N(X-\overline{X})|+|\nabla F^N(Y-\overline{Y})|\rt)\rt]^{1/p}\\
& \qquad \quad \times \E\lt[\lt(|\nabla F^N(X-\overline{X})|+|\nabla F^N(Y-\overline{Y})|\rt)G_N^{p} \rt]^{(p-1)/p}.
\end{aligned}$$
Taking the expectations on $(X,Y)$ in the first expectation and on $(\overline{X},\overline{Y})$ in the second one leads to the desired result. 
\end{proof}

We next estimate the error between solutions to the nonlinear SDE and the one with cut-off given by 
\begin{itemize}
\item {\it Nonlinear SDE:}
\bq\label{eq:VPNLSDE1}
\left\{ \begin{array}{ll}
dY_t=W_t\,dt, & \\[2mm]
\displaystyle dW_t= (F * \rho_t)(Y_t)\,dt + \sqrt{2\sigma} dB_t, \quad  \rho_t=\mathcal{L}(Y_t), &\\[2mm]
\LL (Y_0, W_0)= f_0, &  
\end{array} \right.
\eq
\item {\it Nonlinear SDE with cut-off:}
\begin{equation}\label{eq:NLSDE_CO1}
\left\{ \begin{array}{ll}
dY^N_t=W^N_t\,dt, & \\[2mm]
\displaystyle dW^N_t= (F^N * \rho_t^N)(Y_t^N)\,dt + \sqrt{2\sigma} dB_t, \quad  \rho_t^N=\mathcal{L}(Y^N_t), &\\[2mm]
\LL (Y_0^N, W_0^N)= f_0, &  
\end{array} \right.
\end{equation}
\end{itemize}
Here $\rho^N_t = \int_{\R^d} f^N_t\,dv$ and $f_t^N$ is the global-in-time weak solution of the equation \eqref{reg_VPFP}. As mentioned in Introduction, for fixed $N > 0$, the global existence and uniqueness of solutions to \eqref{eq:NLSDE_CO1} is ensured due to classical SDE theory. At the moment, we assume the existence of solutions to the SDE \eqref{eq:VPNLSDE1} and its associated PDE \eqref{eq:VPFP} up to a given time $T>0$. We will give the details of that in Section \ref{sec:well}. 

\begin{proposition}\label{prop:Cau} For a given $T > 0$, let $(Y_t, W_t)$ and $(Y_t^N, W_t^N)$ be the solutions to the equations  \eqref{eq:VPNLSDE1} and \eqref{eq:NLSDE_CO1} for the same initial condition on the time interval $[0,T]$, respectively. Suppose that $\rho_t, \rho_t^N \in L^1(0,T;L^\infty(\R^d))$ and $\|\rho_t^N\|_{L^1(0,T;L^\infty)} \leq C$ with $C > 0$ independent of $N$. Then, for $N \geq e$ and $p\geq1$, we have 
\[
\E\lt[\sup_{0\leq t \leq T}\lt(\sqrt{\ln N}|Y_t^N-Y_t|+|W_t^N-W_t|\rt)^p \rt]^{1/p} \leq CN^{-\delta}\exp\lt(C\sqrt{\ln N}\rt),
\]
where $C$ is a positive constant independent of $p$ and $N$.
\end{proposition}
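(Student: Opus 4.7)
My plan is to exploit that the two SDEs \eqref{eq:VPNLSDE1} and \eqref{eq:NLSDE_CO1} share the same Brownian motion and the same initial data, so the stochastic part cancels in the difference. Setting $D_t := Y_t^N - Y_t$ and $E_t := W_t^N - W_t$, I observe that these processes are pathwise absolutely continuous with $D_0 = E_0 = 0$, satisfying $\dot D_t = E_t$ and $\dot E_t = \Delta(t) := (F^N*\rho_t^N)(Y_t^N) - (F*\rho_t)(Y_t)$. The key choice is $Z_t := \sqrt{\ln N}|D_t| + |E_t|$, deliberately matching the weighted variable $G_N$ in Proposition \ref{lem:log_lip}. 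This yields the pathwise inequality $\dot Z_t \leq \sqrt{\ln N}|E_t| + |\Delta(t)| \leq \sqrt{\ln N}\,Z_t + |\Delta(t)|$. The weighting $\sqrt{\ln N}$ is precisely what converts the $\ln N$ Lipschitz constant of $F^N * \rho$ from Lemma \ref{lem_useful} into the $\sqrt{\ln N}$ growth rate that matches the $\exp(C\sqrt{\ln N})$ target in the statement.

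Next, I would split the drift difference as
\[
\Delta(t) = [(F^N*\rho_t^N)(Y_t^N) - (F^N*\rho_t)(Y_t)] + [(F^N - F)*\rho_t](Y_t).
\]
For the first (Lipschitz-type) piece, I rewrite each convolution as a conditional expectation over an independent copy $(\bar Y_t^N, \bar Y_t)$ of the joint pair $(Y_t^N, Y_t)$ and apply Proposition \ref{lem:log_lip} directly with $(X,V) = (Y_t^N, W_t^N)$, $(Y,W) = (Y_t, W_t)$, so that $G_N = Z_t$; this yields
\[
\E[Z_t^{p-1}|(F^N*\rho_t^N)(Y_t^N) - (F^N*\rho_t)(Y_t)|] \leq C\sqrt{\ln N}(\|\rho_t\|_\infty + \|\rho_t^N\|_\infty)\,\E[Z_t^p].
\]
The cut-off error piece is handled deterministically: since $F^N = F$ outside $B(0, N^{-\delta})$, a direct size computation gives $\|(F^N - F)*\rho_t\|_\infty \leq CN^{-\delta}\|\rho_t\|_\infty$, from which Hölder produces $\E[Z_t^{p-1}|(F^N-F)*\rho_t(Y_t)|] \leq CN^{-\delta}\|\rho_t\|_\infty\,\E[Z_t^p]^{(p-1)/p}$.

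Combining these with $\frac{d}{dt}\E[Z_t^p] = p\,\E[Z_t^{p-1}\dot Z_t]$ and setting $\phi(t) := \E[Z_t^p]^{1/p}$ yields a $p$-independent differential inequality
\[
\dot\phi(t) \leq C\sqrt{\ln N}(1 + \|\rho_t\|_\infty + \|\rho_t^N\|_\infty)\phi(t) + CN^{-\delta}\|\rho_t\|_\infty,
\]
and Grönwall with the assumed uniform $L^1(0,T;L^\infty)$-bounds on $\rho$ and $\rho^N$ gives the pointwise estimate $\phi(t) \leq CN^{-\delta}\exp(C\sqrt{\ln N})$. To upgrade to the supremum inside the expectation, I use that $Z_t^p$ is absolutely continuous with $Z_0=0$ and $\dot Z_t \leq \sqrt{\ln N}\,Z_t + |\Delta(t)|$, so
\[
Z_t^p \leq p\sqrt{\ln N}\int_0^t Z_s^p\,ds + p\int_0^t Z_s^{p-1}|\Delta(s)|\,ds,
\]
whose right-hand side is monotone in $t$; taking expectation and reusing the previous estimates yields $\E[\sup_{0 \leq t \leq T} Z_t^p]^{1/p} \leq CN^{-\delta}\exp(C\sqrt{\ln N})$, the $p$-independence surviving because $p^{1/p}$ is bounded on $[1,\infty)$. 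The main obstacle is precisely this calibration of the weight $\sqrt{\ln N}$ in $Z_t$ against the log-Lipschitz estimate of Proposition \ref{lem:log_lip}; without it the naive $\nabla F^N$ Lipschitz bound would yield the prohibitive factor $\exp(C\ln N) = N^C$ in place of $\exp(C\sqrt{\ln N})$.
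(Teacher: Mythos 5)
Your proposal is correct and follows essentially the same route as the paper's proof: the same weighted quantity $\sqrt{\ln N}\,|Y_t^N-Y_t|+|W_t^N-W_t|$, the same decomposition of the drift difference into a cut-off Lipschitz piece (handled via Proposition~\ref{lem:log_lip}) plus an $O(N^{-\delta})$ cut-off error, and a Gr\"onwall argument. The only cosmetic difference is organizational: the paper works with $\phi_t^N:=\sup_{0\le s\le t}D_s^p$ from the outset (applying Young's inequality rather than H\"older, then Gr\"onwall once), whereas you first derive the pointwise estimate on $\E[Z_t^p]^{1/p}$ and then separately upgrade to the supremum; both are valid.
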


\begin{proof} For $p\geq1$, we set 
\[
D_s:= \sqrt{\ln N}|Y_s^N-Y_s|+|W_s^N-W_s| \quad \mbox{and} \quad \phi_t^N := \sup_{0 \leq s \leq t} D_s^p.
\]
Then we estimate $\phi_t^N$ as
$$ \begin{aligned}
\phi_t^N & \leq p\int_0^t \lt(\sqrt{\ln N}|W_s^N-W_s|+|F^N*\rho_s^N(Y_s^N)-F*\rho_s(Y_s)|\rt)D_s^{p-1}ds\\
& \leq p\int_0^t \sqrt{\ln N}D_s^{p}+ \lt|F^N* \lt(\rho_s^N(Y_s^N)- \rho_s(Y_s)\rt)\rt|D_s^{p-1} + |(F^N - F)*\rho_s(Y_s)|D_s^{p-1}\,ds.\cr
\end{aligned}$$
Taking the expectation on both sides of the above inequality and using Fubini's Theorem, we obtain
$$\begin{aligned}
\mathbb{E}\lt[\phi_t^N\rt] & \leq p\sqrt{\ln N}\int_0^t \mathbb{E}\lt[\phi_s^N\rt]ds+p\int_0^t \mathbb{E}\lt[|F^N* (\rho_s^N(Y_s^N)- \rho_s(Y_s))|D_s^{p-1}\rt]ds\\
&\quad + p \int_0^t \mathbb{E}\lt[|(F^N - F)*\rho_s(Y_s)|D_s^{p-1}\rt]ds\\
&=: I_1+I_2+I_3,
\end{aligned}$$
where we can directly use the cut-off force field estimate in Lemma \ref{lem:log_lip} to estimate $I_2$ as
\[
I_2  \leq  C p  \sqrt{\ln N} \int_0^t \lt( \|\rho^N_s\|_{\infty}+ \|\rho_s\|_{\infty}\rt) \E\lt[D_s^p\rt]ds,
\]
where $C > 0$ only depends on $d$. For the estimate of $I_3$, we easily find
\begin{align*}
\begin{aligned}
|(F^N - F)*\rho_s(Y_s)|&\leq\int_{|Y_s-y|\leq N^{-\delta}}|Y_s-y|\bigl(|Y_s-y|^{-d}-N^{d\delta}\bigr)\rho_s(dy)\cr
&\leq\int_{|Y_s-y|\leq N^{-\delta}}|Y_s-y|^{-(d-1)}\rho_s(dy) \leq \alpha_d\| \rho_s \|_{\infty}N^{-\delta}.
\end{aligned}
\end{align*}
This yields
$$\begin{aligned}
I_3 &\leq p \alpha_d N^{-\delta}\int_0^t\|\rho_s \|_{\infty}\E\lt[ D_s^{p-1}\rt] \,ds
\leq  p \alpha_d N^{-\delta}\int_0^t\|\rho_s \|_{\infty}\E\lt[ \phi_s^N\rt]^{(p-1)/p} \,ds\cr
&\leq  (p-1)\alpha_d\int_0^t\|\rho_s \|_{\infty}\E\lt[ \phi_s^N\rt] \,ds +  \alpha_d N^{-p\delta}\int_0^t\|\rho_s\|_\infty \,ds,
\end{aligned}$$
where we used Young's inequality for the last inequality. We then combine the above estimates to have
\[
 \E\lt[\phi_t^N\rt] \leq pC \sqrt{\ln N}\int_0^t \lt( 1 + \|\rho_s\|_{\infty} + \|\rho^N_s\|_{\infty}\rt)\E\lt[\phi_s^N \rt]ds + \alpha_d N^{-p\delta}\int_0^t\|\rho_s \|_{\infty} \,ds,
\]
where $C >0$ only depends on $d$. We now apply Lemma \ref{lem_gron}.1 with 
\[
f(t) = \mathbb{E}\lt[\phi_t^N\rt], \quad g(t) = pC\sqrt{\ln N}\lt( 1 + \|\rho_s\|_{\infty} + \|\rho^N_s\|_{\infty}\rt), \quad \mbox{and} \quad h(t) = \alpha_d N^{-p\delta}\int_0^t\|\rho_s \|_{\infty} \,ds,
\]
to obtain
\[
\begin{aligned}
\mathbb{E}\lt[\phi_t^N\rt]& \leq \alpha_d N^{-p\delta}\int_0^t \|\rho_s\|_\infty \exp\lt(p C\sqrt{\ln N}\int_s^t \lt(1 + \|\rho_u\|_\infty + \|\rho^{N}_u\|_\infty \rt) du\rt)ds\\
&\leq Ct N^{-p\delta}\exp\lt(p C\sqrt{\ln N}	t\rt),
\end{aligned}
\]
due to the uniform bound assumption on both $\rho_t$ and $\rho_t^N$ in $L^1(0,T;L^\infty(\R^d))$. This completes the proof. 
\end{proof}

\begin{remark} Note that for any $p >0$
\[
\exp\lt(\sqrt{\ln N}\rt) = \exp\lt(\frac{\ln N}{\sqrt{\ln N}}\rt) = N^{\frac{1}{\sqrt{\ln N}}} = o(N^{p}).
\]
\end{remark}

Let us define the functional $J^N: \R^{4dN} \to \R_+$ by
$$ 
J^N:(\mathcal{X},\mathcal{V},\mathcal{Y},\mathcal{W})\in\R^{4dN} \mapsto 1\wedge\bigl(\sqrt{\ln N}N^{\delta}|\mathcal{X}-\mathcal{Y}|_{\infty}+N^{\delta}|\mathcal{V}-\mathcal{W}|_{\infty}\bigr). 
$$	
Then by using the same argument as in \cite[Theorem 4.2]{LP}, which is based on Gronwall lemma, Lemma \ref{lem:LLN} and Markov's inequality, we obtain the following estimate on $J^N$.
\begin{lemma}\label{lem_j} Let $(\XX_t^N,\VV_t^N)_{t\geq 0}$ be a solution to \eqref{eq:Nps_CO} and let $(\YY_t^N,\WW_t^N)_{t\geq 0}$ be solutions to \eqref{eq:NLps_CO} with the same independent identically distributed initial conditions. 
Assume that 
\[
\sup_{N\in \N} \int_0^T \|\rho_t^N\|_{\infty}\,dt < \infty.
\]
Then, for any $\delta \in (0,\frac{1}{d})$ and $\beta > 0$, there exists $C_\beta>0$ such that
$$
\mathbb{P}\lt(\sup_{0 \leq t \leq T} J_t^N \geq 1 \rt) \leq \frac{C_\beta}{N^\beta},
$$
where $J_t^N:=J^N(\XX_t^N,\VV_t^N,\YY_t^N,\WW_t^N)$. 
\end{lemma}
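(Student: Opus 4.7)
The plan is to introduce the stopping time $\tau_N:=\inf\{t\geq 0: J_t^N\geq 1\}$ and reduce matters to proving $\IP(\tau_N\leq T)\leq C_\beta N^{-\beta}$ for every $\beta>0$, which implies the lemma since $\{\sup_{0\leq t\leq T} J_t^N\geq 1\}\subset \{\tau_N\leq T\}$. Because the Brownian motions driving \eqref{eq:Nps_CO} and \eqref{eq:NLps_CO} are the same, the stochastic integrals cancel in $(X_t^{i,N}-Y_t^{i,N},V_t^{i,N}-W_t^{i,N})$, so the differential inequalities below are pathwise. A key observation is that on $[0,\tau_N)$, each pairwise mismatch $(X^i-X^j)-(Y^i-Y^j)$ has modulus at most $2N^{-\delta}/\sqrt{\ln N}\leq (d-1)N^{-\delta}$ for $N$ large, placing us exactly in the regime where Lemma \ref{lem_useful}.1 applies.

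Set $M_t^i:=\sqrt{\ln N}|X_t^{i,N}-Y_t^{i,N}|+|V_t^{i,N}-W_t^{i,N}|$ and $M_t:=\max_i M_t^i$. I would split the velocity force difference as
\[
\frac{1}{N}\sum_j F^N(X^i-X^j) - F^N*\rho_t^N(Y^i) = \frac{1}{N}\sum_j \bigl[F^N(X^i-X^j) - F^N(Y^i-Y^j)\bigr] + \mathcal{R}_i^F(t),
\]
where $\mathcal{R}_i^F(t):=\tfrac{1}{N}\sum_j F^N(Y^i-Y^j)-F^N*\rho_t^N(Y^i)$ is the law-of-large-numbers fluctuation for the kernel $F^N$. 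Lemma \ref{lem_useful}.1 bounds the Lipschitz part by $C(l^N*\nu_t^{N,Y})(Y^i)\cdot M_t/\sqrt{\ln N}$, where $\nu_t^{N,Y}$ denotes the empirical measure of $(Y_t^{j,N})_j$. Further decomposing $(l^N*\nu_t^{N,Y})(Y^i)=l^N*\rho_t^N(Y^i)+\mathcal{R}_i^l(t)$ and bounding the deterministic part by $C\ln N\,\|\rho_t^N\|_{1,\infty}$ via Lemma \ref{lem_useful}.2, while absorbing $M_t<N^{-\delta}$ on $[0,\tau_N)$ in the remainder, I expect to arrive at
\[
\frac{dM_t}{dt}\leq C\sqrt{\ln N}\bigl(1+\|\rho_t^N\|_{1,\infty}\bigr)M_t + \max_i\mathcal{R}_i^F(t) + \frac{N^{-\delta}}{\sqrt{\ln N}}\max_i\mathcal{R}_i^l(t).
\]
Gronwall on $[0,T\wedge\tau_N]$, combined with the hypothesis $\sup_N\int_0^T\|\rho_s^N\|_{1,\infty}\,ds<\infty$, then yields $M_{T\wedge\tau_N}\leq e^{C\sqrt{\ln N}}\bigl[\int_0^T\max_i\mathcal{R}_i^F(s)\,ds + (N^{-\delta}/\sqrt{\ln N})\int_0^T\max_i\mathcal{R}_i^l(s)\,ds\bigr]$. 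On $\{\tau_N\leq T\}$, path continuity forces $M_{\tau_N}=N^{-\delta}$, so this event is contained in
\[
\bigl\{\textstyle\int_0^T\max_i\mathcal{R}_i^F(s)\,ds\geq cN^{-\delta}e^{-C\sqrt{\ln N}}\bigr\}\,\cup\,\bigl\{\textstyle\int_0^T\max_i\mathcal{R}_i^l(s)\,ds\geq c\sqrt{\ln N}\,e^{-C\sqrt{\ln N}}\bigr\}.
\]

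Both events are then handled by Markov's inequality of order $2m$: for each fixed $s$, $(Y_s^{j,N})_j$ are i.i.d.\ with law $\rho_s^N$, and $F^N$, $l^N$ satisfy \eqref{eq_h} with $\kappa=d-1$ and $\kappa=d$ respectively, so Lemma \ref{lem:LLN} gives a pointwise $2m$-th moment bound $C_m N^{-\gamma_m}$ with $\gamma_m\to\infty$; Jensen's inequality extends this to the time integrals. The resulting net exponents of $N$ are $\gamma_m^F-2m\delta=(1-d\delta)m-1$ for the $F^N$ piece (the $N^{-\delta}$ threshold absorbing a factor $2m\delta$) and $\gamma_m^l=(1-d\delta)m-1$ for the $l^N$ piece, both of which diverge precisely because $\delta<1/d$; the subpolynomial factor $e^{2mC\sqrt{\ln N}}=N^{o(1)}$ is harmless. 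Choosing $m$ large enough depending on $\beta$ concludes the proof.

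The main obstacle is the $\ln N$ blow-up of the velocity-Lipschitz constant of $F^N*\rho$ recorded in Lemma \ref{lem_useful}.2: a naive Gronwall estimate would produce an uncontrollable factor $N^{C}$ and destroy the argument. The asymmetric rescaling of positions by $\sqrt{\ln N}$ built into both $J^N$ and $M_t$ is what converts this $\ln N$ into a harmless $\sqrt{\ln N}$ Gronwall rate (so that $e^{C\sqrt{\ln N}}=N^{o(1)}$), at the price of an extra factor $\sqrt{\ln N}$ in the $l^N$-fluctuation bound, which is in turn absorbed by the $N^{-\delta}$ slack available on $[0,\tau_N)$.
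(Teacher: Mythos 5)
Your proof is correct and follows exactly the route the paper itself indicates (deferring to the Gronwall/LLN/Markov argument of {\rm [LP, Theorem 4.2]}): the decomposition into a Lipschitz part bounded via Lemma \ref{lem_useful} and two law-of-large-numbers fluctuations $\mathcal R^F$, $\mathcal R^l$, the pathwise Gronwall on a stopping-time horizon, and the final high-moment Markov bound using Lemma \ref{lem:LLN} with $\kappa=d-1$ and $\kappa=d$, are all the intended steps. The only minor imprecision is that $M_{\tau_N}$ equals $N^{-\delta}$ only up to a universal constant (since $M_t=\max_i M_t^i$ is equivalent, not equal, to $\sqrt{\ln N}|\mathcal X-\mathcal Y|_\infty+|\mathcal V-\mathcal W|_\infty$), but this is absorbed into the constants and does not affect the conclusion; the exponent computations $\gamma_m^F-2m\delta=\gamma_m^l=(1-d\delta)m-1\to\infty$ are correct.
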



\begin{proof}[Proof of Theorem \ref{thm:PropChao}] For any $p \geq 1$, we first estimate $\WW_p(\mu_t^N, f_t)$ as
\[
\WW_p(\mu_t^N, f_t)\leq \WW_{\infty}(\mu_t^N,\nu_t^N)+\WW_p(\nu_t^N,f_t^N)+\WW_p(f_t^N,f_t),
\]
where the empirical measure $\nu_t^N$ is the associated to $N$ copies solutions to the nonlinear SDE with cut-off \eqref{eq:NLSDE_CO}. It follows from Proposition \ref{prop:Cau} that for any $\gamma<\delta$ and $t\in [0,T]$, we find 
\[
\WW_p(f_t^N,f_t)\leq C N^{-\delta} e^{C\sqrt{\ln N}}\leq N^{-\gamma} \quad \mbox{for $N$ large enough}. 
\]
This implies
\bq\label{est_l1}
\mathbb{P} \lt( \WW_p(\mu_t^N, f_t)\geq 3 N^{-\gamma}\rt)\leq \mathbb{P} \lt( \WW_{\infty}(\mu_t^N, \nu_t^N)\geq N^{-\gamma}\rt)+\mathbb{P} \lt( \WW_p(\nu_t^N, f^N_t)\geq N^{-\gamma}\rt).
\eq
Note that under the event $\{J_t^N< 1 \}$ we get
\[
\WW_{\infty}(\mu_t^N,\nu_t^N)< N^{-\delta},
\]
thus by using Lemma \ref{lem_j}, we obtain
$$\begin{aligned}
 \mathbb{P} \bigl( \WW_{\infty}(\mu_t^N, \nu_t^N)\geq N^{-\gamma}\bigr) &\leq \mathbb{P} \bigl( \WW_{\infty}(\mu_t^N, \nu_t^N)\geq N^{-\delta}\bigr)\leq \mathbb{P} \bigl( J_t^N \geq 1 \bigr)\leq \frac{C_\beta}{N^\beta},
\end{aligned}$$
where $C_\beta > 0$ is independent of $N$. For the estimate of last term in \eqref{est_l1}, we use Proposition \ref{prop_fg} with $x=N^{-p\gamma}$ to have
\[
\mathbb{P} \lt( \WW_p^p(\nu_t^N, f^N_t)\geq N^{-p\gamma}\rt) \leq CN^{1 -\frac{(1 - p\gamma)(q-\e)}{p}}+a(N,N^{-p\gamma}).
\]
Combining the above estimates concludes the desired result. 
\end{proof}

%
%
%
%
\section{Well-posedness of nonlinear SDE}\label{sec:well}

In this section, we study the well-posedness of nonlinear SDE \eqref{eq:VPNLSDE1} which is associated to the VPFP equation \eqref{eq:VPFP}. For this, we use the nonlinear SDEs with cut-off given by
\begin{equation}\label{eq:NLSDE_CO}
\left\{ \begin{array}{ll}
dY^N_t=W^N_t\,dt, & \\[2mm]
\displaystyle dW^N_t= (F^N * \rho_t^N)(Y_t^N)\,dt + \sqrt{2\sigma} dB_t, \quad  \rho_t^N=\mathcal{L}(Y^N_t), &\\[2mm]
\end{array} \right.
\end{equation}
where $\rho^N_t$ is the spatial density of solution to \eqref{reg_VPFP}. We first show the uniform-in-$N$ estimate of spatial density $\rho_t^N$ in $L^1(0,T;L^\infty(\R^d))$. 

\begin{lemma}\label{lem:Linfbound} Let $T > 0$. Assume that the initial data $f_0$ satisfies
$f_0 \in (L^1 \cap L^\infty)(\R^{2d})$.
Then there exists a unique weak solution $f_t^N$ to the system \eqref{reg_VPFP} with the initial data $f_0$, such that $f^N \in L^\infty(0,T;(L^1 \cap L^\infty)(\R^{2d}))$. Furthermore, if we assume that for some $C > 0$
\[
f_0(x,v)\leq C \left\langle v \right\rangle^{-\gamma} \quad \mbox{where} \quad \lal v \ral = \sqrt{1 + |v|^2},
\]
then, for $\gamma >d$, there exists a time $T \geq T_* > 0$ such that
\[
\sup_{N\geq 1} \sup_{t\in [0,T_*]}\|\rho_t^N\|_{\infty}<\infty,
\]
where $\rho_t^N$ denotes the spatial density of the law of solution at time $t$ to equation \eqref{eq:NLSDE_CO}.
\end{lemma}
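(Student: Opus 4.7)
The plan splits along the two assertions of the lemma. For the well-posedness part ($f^N \in L^\infty(0,T;L^1 \cap L^\infty)$), the key point is that for each fixed $N$ the cut-off kernel is uniformly bounded, $\|F^N_\delta\|_\infty \leq N^{(d-1)\delta}$, so the regularized VPFP is a kinetic Fokker-Planck equation with bounded drift. I would construct $f^N$ by a Picard iteration on the fixed-point map $\rho \mapsto F^N_\delta * \rho \mapsto f$, solving each linearised problem by Duhamel against the free kinetic Fokker-Planck semigroup to preserve $L^1 \cap L^\infty$ regularity at every step, and closing the iteration in $L^\infty(0,T;L^1)$ via the Lipschitz character of $F^N_\delta$. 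Mass is conserved; the $L^\infty$ bound follows from the maximum principle: since the drift $(v, F^N_\delta*\rho^N_t)$ is divergence-free in phase space, testing the equation against $(f^N_t - M)_+^{p-1}$ and integrating by parts gives $\|f^N_t\|_\infty \leq \|f_0\|_\infty$.

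The substance of the lemma is the second assertion, the uniform-in-$N$ bound on $\|\rho^N_t\|_\infty$ on a short time interval. The idea is to propagate the polynomial $v$-decay of $f_0$ forward in time via Feynman-Kac/Duhamel against the free kinetic Fokker-Planck semigroup $(S_t)_{t\geq 0}$, whose kernel $K_t(\xi,\eta)$ is the explicit degenerate Gaussian on $\R^{2d}$ with variance of order $t^3$ in $\xi$, $t$ in $\eta$, and cross-covariance of order $t^2$. Setting $b^N_s := F^N_\delta * \rho^N_s$ and $H_t := \sup_{x,v}\lal v\ral^\gamma f^N_t(x,v)$, Duhamel reads
\[
f^N_t = S_t f_0 - \int_0^t S_{t-s}\nabla_v\cdot(b^N_s f^N_s)\,ds.
\]
The free term is handled by a change of variables in the convolution, integrating out the $\xi$-marginal of $K_t$, using $f_0(y,w)\leq C\lal w\ral^{-\gamma}$ and the elementary inequality $\lal v\ral^\gamma \leq C(\lal w\ral^\gamma + |v-w|^\gamma)$, to obtain $\lal v\ral^\gamma (S_t f_0)(x,v) \leq C(1+t^{\gamma/2})\|\lal\cdot\ral^\gamma f_0\|_\infty$. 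For the Duhamel integrand I would integrate by parts in $w$, transferring the derivative onto the kernel via $\nabla_w K_r = -r\nabla_\xi K_r - \nabla_\eta K_r$, apply the Gaussian estimate $|r\nabla_\xi K_r + \nabla_\eta K_r|\leq Cr^{-1/2}\widetilde{K}_r$ with $\widetilde{K}_r$ a slightly enlarged Gaussian, and use the $N$-independent bound $\|b^N_s\|_\infty \leq C(1 + \|\rho^N_s\|_\infty)$ obtained by splitting the convolution at $|z|=N^{-\delta}$: the near contribution is at most $CN^{-\delta}\|\rho^N_s\|_\infty$ while the far contribution is controlled by an $L^1+L^\infty$-type estimate on $|z|^{-(d-1)}$ against $\rho^N_s$.

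Combining these ingredients with $\|\rho^N_s\|_\infty \leq H_s \int\lal w\ral^{-\gamma}\,dw \leq C_\gamma H_s$ (finite because $\gamma>d$) yields the weakly singular self-referential inequality
\[
H_t \leq C(1+H_0) + C\int_0^t (t-s)^{-1/2}\bigl(1 + (t-s)^{\gamma/2}\bigr)(1+H_s)H_s\,ds,
\]
with all constants independent of $N$. Since the kernel $(t-s)^{-1/2}$ is locally integrable, a short-time fractional Gronwall/continuation argument produces $T_*>0$, depending only on $H_0, \sigma, \gamma, d$, such that $\sup_{t\in[0,T_*]} H_t \leq 2C(1+H_0)$ uniformly in $N$, from which $\|\rho^N_t\|_\infty \leq C_\gamma H_t$ is uniformly bounded on $[0,T_*]$. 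The main obstacle is marrying the hypoelliptic scaling (variance $t^3$ in the spatial marginal) with the polynomial $v$-weight so that one both extracts an integrable singularity in the Gronwall kernel and keeps all constants genuinely free of $N$; the latter rests entirely on the splitting yielding $\|b^N_s\|_\infty \leq C(1+\|\rho^N_s\|_\infty)$, which is the only place where the structure of the regularized Newtonian kernel is used in a non-trivial way.
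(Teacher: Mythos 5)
Your proposal is correct in substance and takes a genuinely different route from the paper. The paper proves the uniform-in-$N$ bound probabilistically: it mollifies $f_0$ by $\chi_\e$, writes the solution via a Feynman--Kac representation along the \emph{backward} stochastic characteristics $(Y^{x,v}_s,W^{x,v}_s)$ driven by the bounded drift $F^N\ast\rho^{N,\e}$, and then uses the elementary facts $\lal v-w\ral^{-1}\leq\sqrt2\,\lal v\ral^{-1}\lal w\ral$, $\lal v+w\ral^\gamma\leq C_\gamma(1+|v|^\gamma+|w|^\gamma)$, together with $\sup_x|F^N\ast\rho^{N,\e}(x)|\leq C\|\rho^{N,\e}\|_{1,\infty}$, to deduce
$\|\rho^{N,\e}_t\|_\infty\leq C\bigl(1+\int_0^t\|\rho^{N,\e}_s\|_\infty^\gamma\,ds\bigr)$,
closed locally in time by the nonlinear (but regular-kernel) Gronwall inequality of Lemma~A.1.3 and then $\e\to0$. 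You instead keep everything analytic: a Duhamel expansion against the free kinetic Fokker--Planck semigroup, transferring $\nabla_v$ onto the Kolmogorov kernel, using the hypoelliptic scaling to extract a $(t-s)^{-1/2}$ singularity and a $(t-s)^{\gamma/2}$ growth from the weight, which yields a \emph{quadratic} nonlinearity with a weakly singular kernel for $H_t=\sup_{x,v}\lal v\ral^\gamma f^N_t$, closed by a short-time singular Gronwall argument. Both give the claim; the paper's route is shorter precisely because it never has to manipulate the degenerate Gaussian $K_t$ or its gradient (the $t^3/t^2/t$ covariance structure), and the $\gamma$-th power enters only through a single elementary moment of Brownian motion, whereas your route is closer to classical parabolic-regularity arguments and would transfer more easily to settings where a Feynman--Kac formula is unavailable. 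Two small points worth keeping in mind if you flesh this out: you apply Duhamel directly to the weak solution $f^N_t$ rather than first mollifying as the paper does, so you should justify that the integration by parts in $w$ against $K_{t-s}$ is legitimate at this regularity (a standard approximation step); and your kernel relation $\nabla_w K_r=-r\nabla_\xi K_r-\nabla_\eta K_r$ depends on the exact convention for the arguments of $K_r$, but as you note only the net $r^{-1/2}$ scaling matters and that is correct.
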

\begin{proof} Since the existence and uniqueness of solutions $f^N_t$ to the equation \eqref{reg_VPFP} is classical due to the regularity of the force fields, we only focus on the uniform-in-$N$ estimate of the spatial density $\rho_t^N$ in the rest of the proof. We divide the proof into two steps.

$\bullet$ {\bf Stetp A (Feynman-Kac's representation formula)} 
Let $(\chi_\e)_{\e>0}$ be a familiy of mollifying kernels. First, we notice that $f^N_t=\LL(Y_t^N,W_t^N)$, the law of solution to \eqref{eq:NLSDE_CO}, is a solution in the sense of distributions to 
\[
\partial_t f^N_t+v\cdot \nabla_x f^N_t+ (F^N*\rho_t^N) \cdot \nabla_v f^N_t=\sigma\Delta_v f^N_t, \quad (x,v) \in \R^d \times \R^d, t > 0,
\]
with the initial data $f_0=\LL(Y_0^N,W_0^N)$. Denote by $f_t^{N,\e}$ the solution to the same equation with initial condition $f_0*\chi_\e$. Since the coefficients of the above equation are Lipschitz and locally bounded, classical existence theory guarantees the global existence and uniqueness of strong solutions. We now fix $t \ge 0$ and consider the following ``backward'' stochastic integral equations:
\begin{equation*} 
Y^{x,v}_s  = x - \int_0^s W^{x,v}_u \,du, \qquad
W^{x,v}_s  = v - \int_0^s F^N*\rho^{N,\e}_{t-u}(Y^{x,v}_{u})\, du  + \sqrt{2\sigma} \,B_s ,
\end{equation*}
It is classical that there exists a unique strong solutions to the above equations due to the strong regularity of the force fields. We next set  
\[
\theta_s :=  f^{N,\e}_{t-s} \bigl(Y_s^{x,v},W_s^{x,v} \bigr),
\]
and apply Ito's rule to $\theta$ to find
$$\begin{aligned}
\theta_s & = \theta_0+\int_0^s-\lt( \partial_u f^{N,\e}_{t-u}(Y_u^{x,v},W_u^{x,v})-\left\langle \nabla_x f^{N,\e}_{t-u}(Y_u^{x,v},W_u^{x,v}), W_u^{x,v}\right\rangle\rt)du \cr
&\quad -\int_0^s\left\langle \nabla_v  f^{N,\e}_{t-u}(Y_u^{x,v},W_u^{x,v}), F^N*\rho^{N,\e}_{t-u}(Y^{x,v}_{u})\right\rangle du +\sqrt{2\sigma} \int_0^s\left\langle \nabla_v  f^{N,\e}_{t-u}(Y_u^{x,v},W_u^{x,v}),  dB_u\right\rangle \\
&\quad + \sigma\int_0^s \Delta_v f^{N,\e}_{t-u}(Y_u^{x,v},W_u^{x,v})\,du.
\end{aligned}$$
Taking the expectation to the above equation together with 
\[
\E\lt[\int_0^s\left\langle \nabla_v  f^{N,\e}_{t-u}(Y_u^{x,v},W_u^{x,v}), dB_u\right\rangle\rt]=0
\]
yields
$$\begin{aligned}
&\E\lt[\theta_s\rt]=\E\lt[\theta_0\rt]\\
&+\int_0^s \E\lt[\lt(\underbrace{-\pa_u f^{N,\e}_{t-u}-v\cdot\nabla_x f^{N,\e}_{t-u}-(F^N*\rho_{t-u}^{N,\e})(x)\cdot\nabla_v f_{t-u}^{N,\e} +\sigma\Delta_v f^{N,\e}_{t-u}(x,v)}_{=0}\rt)(Y_u^{x,v},W_u^{x,v})\rt]du.
\end{aligned}$$
Thus, finally, we choose $s=t$ to have 
\[
f^{N,\e}_t(x,v)=\E\lt[f_0*\chi_\e(Y_t^{x,v},W_t^{x,v})\rt].
\]
$\bullet$ {\bf Step B (Uniform-in-$N$ estimate)} It follows from the previous bound that 
\[
f^{N,\e}_t(x,v)\leq C\E\lt[\left\langle v- \int_0^t F^N*\rho^{N,\e}_{t-u}(Y^{x,v}_{u})\, du  + \sqrt{2\sigma} \,B_t  \right\rangle^{-\gamma} \rt].
\]
Note that for all $v,w\in \R^d$ and $\gamma \geq 1$
\[
\left\langle v-w \right\rangle^{-1}\leq \sqrt{2}\left\langle v\right\rangle^{-1}\left\langle w\right\rangle \quad \mbox{and} \quad \left\langle v+ w\right\rangle^{\gamma}\leq C_{\gamma}\lt(1+|v|^{\gamma}+|w|^{\gamma}\rt).
\]
Using those facts, we get
\[
f^{N,\e}_t(x,v)\leq C \left\langle v \right\rangle^{-\gamma}  \E\lt[   \left\langle - \int_0^t F^N*\rho^{N,\e}_{t-u}(Y^{x,v}_{u})\, du  + \sqrt{2\sigma} \,B_t  \right\rangle^{\gamma} \rt],
\]
and further we find for $\gamma > d$ 
$$\begin{aligned}
\|\rho_t^{N,\e}\|_{\infty} &\leq C \lt( 1 +(\sqrt{2\sigma})^{\gamma}\E\lt[|B_t|^{\gamma}\rt]+ \E\lt[\lt|\int_0^t F^N*\rho^{N,\e}_{t-u}(Y^{x,v}_{u})\, du\rt|^{\gamma}\rt]\rt)\cr
&\leq C\lt(1 + t^{\gamma} + \E\lt[\lt|\int_0^t F^N*\rho^{N,\e}_{t-u}(Y^{x,v}_{u})\, du\rt|^{\gamma}\rt] \rt).
\end{aligned}$$
For the estimate of the last term in the above inequality, we use 
$$\begin{aligned}
\sup_{x\in \R^d}|F^N*\rho^{N,\e}(x)| & \leq \sup_{x\in \R^d}\int_{|x-y|\leq 1}|F^N(x-y)|\rho^{N,\e}(dy)+\sup_{x\in \R^d}\int_{|x-y|> 1}|F^N(x-y)|\rho^{N,\e}(dy)\\
& \quad \leq \|\rho^{N,\e}\|_{\infty}\int_{|y|\leq 1}|y|^{-(d-1)}dy+\|\rho^{N,\e}\|_1\leq C\|\rho^{N,\e}\|_{1,\infty},
\end{aligned}$$
to find for any fixed time $T > 0$
\[
\|\rho_t^{N,\e}\|_{\infty} \leq C\lt(1 + \int_0^t \|\rho^{N,\e}_s\|_\infty^\gamma\,ds  \rt) \quad \mbox{for} \quad 0 \leq t \leq T,
\]
where $C > 0$ is independent of $N$. Finally, we use Lemma \ref{lem_gron}.3 to have
\[
\sup_{0 \leq t \leq T_*}\|\rho^{N,\e}_t\|_\infty \leq C \quad \mbox{with} \quad T_* < \min\lt\{ \frac{\gamma - 1}{C^\gamma}, T\rt\},
\]
where $C > 0$ is independent of $N$. The result follows from the fact that $\rho^{N,\e}_t$ converges at least weakly star to $\rho_t^N$ in $L^\infty(\R^d)$ as $\e$ goes to $0$.
\end{proof}
In the theorem below, we provide the existence and uniqueness of strong solutions for the system \eqref{eq:VPNLSDE1} and weak solutions for the equation \eqref{eq:VPFP} up to time $T_*$. 
\begin{theorem}\label{thm:ex-uniq} 
	Let $p \geq 1$ and $(Y_0,W_0)$ be independent of $(B_t)_{t\geq 0}$ with law $f_0$. Suppose that $f_0$ satisfies 
	\[
	f_0 \in (L^1 \cap \pp_p)(\R^{2d}) \quad \mbox{and} \quad f_0(x,v)\leq C \left\langle v \right\rangle^{-\gamma},
	\]
	for some $C>0$ and any $p \geq 1$. Then there exists at most one solution to the nonlinear SDE \eqref{eq:VPNLSDE1} where $\rho_t = \int_{\R^d} f_t\,dv$ and $f_t \in L^\infty(0,T_*;(L^1 \cap L^\infty)(\R^{2d})) \cap \mc([0,T_*];\pp_p(\R^{2d}))$ is a unique weak solution to the equation \eqref{eq:VPFP} satisfying
	$\rho_t\in L^{\infty}(0,T_*;L^{\infty}(\R^d))$.
\end{theorem}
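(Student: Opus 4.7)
My plan is twofold: obtain \textit{existence} via a Cauchy-sequence/compactness argument on the regularized system \eqref{eq:NLSDE_CO}--\eqref{reg_VPFP}, and obtain \textit{uniqueness} via an Osgood argument driven by the non cut-off log-Lipschitz estimate of Proposition \ref{lem:log_lip}.

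For existence, I would start from the fact that, for each $N$, the regularized nonlinear SDE \eqref{eq:NLSDE_CO} admits a unique strong solution $(Y^N, W^N)$, and by Lemma \ref{lem:Linfbound} there exist $T_* > 0$ and $M$ with $\sup_N \sup_{t\in[0,T_*]} \|\rho^N_t\|_\infty \le M$. Then, adapting the Gronwall computation in Proposition \ref{prop:Cau} to compare two cut-off systems indexed by $M \geq N$ rather than a cut-off system against the limit, and using the crude estimate $\|(F^M - F^N)*\rho\|_\infty \ls \|\rho\|_\infty N^{-\delta}$ uniform in $M\ge N$, I expect
\[
\E\Bigl[\sup_{s \le T_*}\bigl(\sqrt{\ln N}\,|Y^M_s-Y^N_s|+|W^M_s-W^N_s|\bigr)^p\Bigr] \leq C N^{-p\delta}\exp(Cp\sqrt{\ln N}\,T_*).
\]
This makes $(Y^N,W^N)$ Cauchy in $L^p(\om;\mc([0,T_*];\R^{2d}))$; the limit $(Y,W)$ solves \eqref{eq:VPNLSDE1}, and its law density $f_t$ inherits $L^1\cap L^\infty$ bounds from $f^N_t$ by weak-$\ast$ compactness with $\rho_t\in L^\infty(0,T_*;L^\infty)$ by lower semicontinuity. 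Standard $p$-th moment estimates for \eqref{eq:NLSDE_CO} then give $f\in \mc([0,T_*];\pp_p)$, and passing to the limit in the weak formulation of \eqref{reg_VPFP} produces a weak solution of \eqref{eq:VPFP}.

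For uniqueness, I would take two solutions $(Y^i, W^i)$, $i=1,2$, of \eqref{eq:VPNLSDE1} with the same initial data and the same driving Brownian motion, both having $\rho^i\in L^\infty(0,T_*;L^\infty)$. Setting $G_s:=|Y^1_s-Y^2_s|+|W^1_s-W^2_s|$, the Brownian increments cancel and
\[
G_t^p \le p\int_0^t \bigl(G_s^p + |F*\rho^1_s(Y^1_s) - F*\rho^2_s(Y^2_s)|\,G_s^{p-1}\bigr)\,ds.
\]
Choosing $(\bar Y^1,\bar Y^2)$ to be an independent copy of $(Y^1,Y^2)$ and writing $F*\rho^i_s(Y^i_s)=\E[F(Y^i_s-\bar Y^i_s)\mid Y^1_s,Y^2_s]$, Jensen followed by Proposition \ref{lem:log_lip} gives
\[
\E\bigl[|F*\rho^1_s(Y^1_s)-F*\rho^2_s(Y^2_s)|\,G_s^{p-1}\bigr] \le C(\|\rho^1_s\|_\infty+\|\rho^2_s\|_\infty)\,\phi(s)\bigl(1-\tfrac{1}{p}\ln^- \phi(s)\bigr),
\]
where $\phi(s):=\E[G_s^p]$. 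This reduces the problem to the Osgood inequality $\phi(t)\le C\int_0^t \phi(s)(1-\tfrac{1}{p}\ln^-\phi(s))\,ds$, whose modulus $\omega(u)=u(1-\tfrac{1}{p}\ln^- u)$ satisfies $\int_{0^+}du/\omega(u)=+\infty$, so $\phi\equiv 0$ and the two solutions coincide pathwise; uniqueness of the weak PDE solution in the class $\rho\in L^\infty(0,T_*;L^\infty)$ then follows.

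The main obstacle sits in the uniqueness step: because Proposition \ref{lem:log_lip} only yields a log-Lipschitz modulus one cannot close the estimate by standard Gronwall and has to invoke Osgood, which requires checking that the estimate of Proposition \ref{lem:log_lip} remains valid when applied to the \emph{coupled} pair $(Y^1,Y^2)$ together with an independent copy --- this is legitimate because the proposition only uses the marginal laws of the two target random variables and an auxiliary independent copy of their first coordinates. Existence, by contrast, is comparatively standard once the uniform bound of Lemma \ref{lem:Linfbound} is in hand.
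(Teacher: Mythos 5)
Your proposal is correct and follows essentially the same route as the paper's proof: Cauchy estimates between cut-off systems via the Gronwall argument of Proposition \ref{prop:Cau} together with passage to the limit for existence, and an Osgood-type argument driven by the log-Lipschitz estimate of Proposition \ref{lem:log_lip} for uniqueness. The paper is slightly more explicit at two points you gloss over --- verifying that the limit process actually solves \eqref{eq:VPNLSDE1} by showing $\E\bigl[\int_0^T|F^N*\rho^N_t(Y^N_t)-F*\rho_t(Y_t)|\,dt\bigr]\to 0$, and invoking the explicit Gronwall--Osgood estimate of Lemma \ref{lem_gron} --- but the structure and the key ideas are the same.
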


\begin{proof}[Proof of Theorem \ref{thm:ex-uniq}] We split the proof into three steps.

$\bullet$ {\bf Step A (Cauchy estimates)} Let $(Y_t^N, W_t^N)$ be the strong solution to the system \eqref{eq:NLSDE_CO} on the time interval $[0,T]$. Then by using a similar argument as in Proposition \ref{prop:Cau}, we can show that for $N,N'\geq e$ 
\[
\E\lt[\sup_{0\leq t \leq T}\lt(\sqrt{\ln N}|Y_t^N-Y_t^{N'}|+|W_t^N-W_t^{N'}|\rt)^p \rt]^{1/p} \leq C\lt( N^{-\delta} + (N')^{-\delta} \rt)\exp\lt(C\sqrt{\ln N}\rt),
\]
where $C$ is a positive constant independent of $p, N'$ and $N$.

$\bullet$ {\bf Step B (Existence)} It follows from the previous step that the sequence $(Y^N_t,W^N_t)_{N \in \mathbb{N}}$ of solution to \eqref{eq:NLSDE_CO}  is a Cauchy sequence. Thus there exists a limit process $(Y_t,W_t)_{t \in [0,T]}$ such that $(Y^N_t,W^N_t) \to (Y_t,W_t)$ as $N \to \infty$ in $L^p(\R^{2d} \times (0,T))$. Moreover, denoting by $(f_t^N)_{N \in \mathbb{N}}$ the sequence of the law of solution to \eqref{eq:NLSDE_CO}, we find
$$\begin{aligned}
\sup_{0 \leq t \leq T} \WW_p(f_t^N,f_t^{N'})&\leq \E\lt[\sup_{0\leq t \leq T}\lt(\sqrt{\ln N}|Y_t^N-Y_t^{N'}|+|W_t^N-W_t^{N'}|\rt)^p \rt]^{1/p} \cr
&\leq C\lt( N^{-\delta} + (N')^{-\delta} \rt)\exp\lt(\sqrt{\ln N} \rt),
\end{aligned}$$
where $C$ is a positive constant independent of $p$ and $N$. This deduces that $(f_t^N)_{N \in \mathbb{N}}$ converges weakly to some $f_t \in C([0,T];\mathcal{P}_p(\R^d))$ which is the law of $(Y_t,W_t)_{t \in [0,T]}$. It now remains to prove that this process is indeed a solution to \eqref{eq:VPNLSDE1}. In order to check this, it is sufficient to prove that 
$(F^N*\rho_t^N(Y_t^N))_{t\in [0,T]}$ converges $\mathbb{P}$ almost surely (up to a subsequence) to $(F*\rho_t(Y_t))_{t\in [0,T]}$. It follows from Proposition \ref{lem:log_lip}, Lemma \ref{lem:Linfbound}, and $\|(F^N - F) * \rho_t\|_{\infty} \leq \|F^N - F\|
_1\|\rho_t\|_\infty \leq CN^{-\delta}\|\rho_t\|_\infty$ that 
$$ \begin{aligned}
&\E\lt[\int_0^T \lt|F^N*\rho_t^N(Y_t^N)-F*\rho_t(Y_t)\rt|dt\rt]\cr
&\quad \leq \E\lt[\int_0^T \lt|F^N*\rho_t^N(Y_t^N)-F^N*\rho_t(Y_t)\rt|dt\rt]+\E\lt[\int_0^T\lt|F^N*\rho_t(Y_t)-F*\rho_t(Y_t)\rt|dt\rt]\\
& \quad \leq C\ln N\int_0^T\lt(\|\rho^N_t\|_{\infty}+\|\rho_t\|_{\infty}\rt) \E\lt[|Y_t^N-Y_t|\rt]dt + CN^{-\delta}\int_0^T\|\rho_t\|_{\infty}\,dt\\
& \quad \leq C \frac{\ln N}{N^\delta} e^{C\sqrt {\ln N}} \to 0 \quad \mbox{as} \quad N \to \infty,
\end{aligned}$$
where $C > 0$ is independent of $N$. 
	
$\bullet$ {\bf Step C (Uniqueness)} Let $(Y_t^1,W_t^1)_{t\geq 0}$ and $(Y_t^2,W_t^2)_{t\geq 0}$ be two solutions to \eqref{eq:VPNLSDE1} with the same initial data $(Y_0,W_0)$ such that $\rho^i_t \in L^1(0,T;L^\infty(\R^d)), i=1,2$. Set
\[
\Delta_t := |Y_t^1-Y_t^2|+|W_t^1-W_t^2|.
\]
Then it follows from Proposition \ref{lem:log_lip} that
$$\begin{aligned}
\frac{d}{dt} \E\lt[\Delta_t^p\rt] &\leq p\E\lt[|W_t^1-W_t^2|\Delta_t^{p-1}\rt] + \E\lt[p |F*\rho_t^1(Y_t^1)-F*\rho_t^2(Y_t^2)|\Delta_t^{p-1}\rt]\cr
&\leq p\E\lt[\Delta_t^p\rt] + pC(\|\rho_t^1\|_\infty + \|\rho_t^2\|_\infty) \E\lt[\Delta_t^p\rt]\lt(1 - \frac{1}{p}\ln^-\E\lt[ \Delta_t^p\rt] \rt).
\end{aligned}$$
Set $Q(t) := \E\lt[\Delta_t^p\rt]$, then we get
\[
Q'(t) \leq pQ(t) + Cp(\|\rho_t^1\|_\infty + \|\rho_t^2\|_\infty) Q(t) \lt( 1 - \ln^- Q(t) \rt),
\]
for $Q(t) \leq e$. On the other hand, since $Q_0 = 0$, applying Lemma \ref{lem_gron}.2 yields $Q(t) = 0$ for $t \in [0,T]$. It is very clear that the uniqueness of solutions to \eqref{eq:VPNLSDE1} implies the uniqueness of weak solutions to the equation \eqref{eq:VPFP}.

\end{proof}

%
%
%
%
\section{Vlasov-Fokker-Planck equation with less singular interactions than Newtonian}
The previous strategy can directly be applied for the system \eqref{eq:VPFP} with milder singular interaction forces. To be more precise, let us consider the following nonlinear Vlasov-Fokker-Planck equation with singular interactions:
\bq\label{eq:VPFP2}
\displaystyle \pa_t f_t+v\cdot\nabla_x f_t+ (F_\alpha*\rho_t)\cdot\nabla_v f_t=\sigma\Delta_v f_t, \quad (x,v) \in \R^d \times \R^d, \quad t > 0,
\eq
where $F_\alpha$ satisfies 
\[
|F_\alpha(x)| \leq  \frac{1}{|x|^{\alpha}} \quad \mbox{and} \quad \lt| \nabla F_\alpha(x) \rt| \leq \frac{1}{|x|^{\alpha+1}} \quad \forall \,x \in \R^d \setminus \{0\},
\]
with $F_\alpha(0) = 0$ by definition. Note that $\alpha = d-1$ corresponds to the Newtonian case \eqref{cou}. Concerning the particle approximations, in a similar fashion as before, we consider the following stochastic particle system with cut-off given by 
\begin{equation}\label{eq:Nps_CO_a}
\left\{ \begin{array}{ll}
\displaystyle dX_t^{i,N}=V_t^{i,N}dt,    & \\[2mm]
\displaystyle dV_t^{i,N}=\frac{1}{N}\sum_{j=1}^N F^N_{\delta,\alpha} (X_t^{i,N}-X_t^{j,N})dt+\sqrt{2\sigma} dB^{i,N}_t, & 
\end{array} \right. \quad i=1,\cdots,N, \quad t > 0, 
\end{equation}
where the cut-off interaction potential $F^N_{\delta,\alpha}$ is given by $F^N_{\delta,\alpha}(x) = F_\alpha(x)$ for $|x| \geq N^{-\delta}$ and satisfies 
\[
\lt| F^N_{\delta,\alpha}(x) \rt| \leq N^{\alpha\delta} \quad \mbox{and} \quad \lt| \nabla F^N_{\delta,\alpha}(x) \rt| \leq N^{(\alpha+1)\delta}  \quad \mbox{for } |x| < N^{-\delta}.
\]
Then defining the associated empirical measure
\[
\mu_t^N=\frac{1}{N}\sum_{i=1}\delta_{X_t^{i,N},V_t^{i,N}},
\]
we can state the following result. 

\begin{theorem}\label{thm:PropChao2}
	Let $T > 0$ and $d > 1$. Let $(X_0^{i,N}, V_0^{i,N})_{i=1,\cdots,N}$ be $N$ independent variables with law $f_0$. Let $f_t$ and $f_t^N$ be the solutions to the nonlinear Vlasov-Fokker-Planck equation \eqref{eq:VPFP2} and its corresponding regularization \eqref{reg_VPFP} with $F^N_{\delta,\alpha}$ instead of $F^N_\delta$ respectively, up to time $T > 0$. Assume that $0 \leq \alpha < d/\ell' - 1$ and that $f, f^N \in L^\infty(0,T; (L^1 \cap L^\ell)(\R^{2d})) \cap \mc([0,T]; \pp_q(\R^{2d}))$ with the same initial data $f_0 \in (L^1 \cap L^\ell \cap \pp_q)(\R^{2d})$ for some $q \geq 2$. Furthermore, assume that their respective spatial densities $\rho_t$ and $\rho^N_t$ satisfy
	\[
	\int_0^T \|\rho_t\|_{\ell}\,dt \leq C_0 \quad \mbox{and} \quad \sup_{N\in \mathbb{N}}\int_0^T \|\rho^N_t\|_{\ell}\,dt \leq C_0,
	\]
where $C_0$ only depending on the initial data and $T$. Then, for any $p \in [1,2q)$, $\delta$ satisfying either
	\[
	\frac{\ell'}{d} \leq \delta < \frac{1}{1+\alpha}
	\]
	or
	\[
	\ell' > \frac{d}{2(1+\alpha)} \quad \mbox{and} \quad \delta < \frac{\ell'}{d},
	\]
and $\e \in \lt(0, q - \frac{p}{1 - p \delta} \rt)$, the estimate  	
	\[
	\sup_{0 \leq t \leq T}\mathbb{P}\lt(\WW_p(\mu_t^N,f_t)\geq 3N^{-\delta} \rt) \leq C N^{\frac1p\lt(1 - \frac{(1-p\delta)(q - \e)}{p} \rt)} + C_N \quad \mbox{for $N$ large enough}
	\]
holds for some constant $C>0$ depending only on $d,T,p,q,\e, f_0$, and $C_0$. Here $C_N$ is given by
	\[
	-\log C_N=\begin{cases}
	\displaystyle \frac Cp N^{1 - 2p\delta} & \text{ if } p>d, \\[1mm]
	\displaystyle \frac{CN^{1 - 2p\delta}}{p (\ln(2 + N^{p\delta}))^2} & \text{ if } p=d, \\[4mm] 
	\displaystyle \frac Cp N^{1 - 2d\delta} & \text{ if } 1 \leq p<d.
	\end{cases}
	\]
\end{theorem}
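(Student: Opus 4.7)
The plan is to mimic the decomposition used in the proof of Theorem \ref{thm:PropChao}. Denoting by $\nu_t^N$ the empirical measure of $N$ independent copies of the nonlinear SDE with cut-off driven by $F^N_{\delta,\alpha}$ (with common law $f_t^N$ solving the regularized version of \eqref{eq:VPFP2}), I write
\[
\WW_p(\mu_t^N, f_t) \leq \WW_\infty(\mu_t^N, \nu_t^N) + \WW_p(\nu_t^N, f_t^N) + \WW_p(f_t^N, f_t),
\]
and control each of the three terms separately. The middle term is handled verbatim by Proposition \ref{prop_fg} applied to i.i.d. samples of law $f_t^N$ with the choice $x = N^{-p\delta}$; this produces the factor $C_N$ of the statement.

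For the first term I carry out the analogue of Lemma \ref{lem_j}. The only places where the Newtonian structure of $F$ entered there were (i) the Lipschitz-type control of $F^N$ from Lemma \ref{lem_lip} and (ii) the concentration bound of Lemma \ref{lem:LLN}. The estimate
\[
|F^N_{\delta,\alpha}(x) - F^N_{\delta,\alpha}(y)| \leq C|x-y|\Bigl(\tfrac{1}{(|x|\vee N^{-\delta})^{\alpha+1}} + \tfrac{1}{(|y|\vee N^{-\delta})^{\alpha+1}}\Bigr)
\]
follows from exactly the same case analysis as Lemma \ref{lem_lip}, and I apply Lemma \ref{lem:LLN} with $\kappa = \alpha + 1$ and with the analogue of $h = l^N$ truncated at scale $N^{-\delta}$; the restriction $\e = 2(\alpha+1)\delta + (1 - d\delta)\mathbf{1}_{1 > d\delta} < 2$ is precisely what dictates the two alternative ranges for $\delta$ in the statement after incorporating the available $L^\ell$ bound (rather than $L^\infty$) on the densities via Hölder's inequality, with threshold $\ell'/d$. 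Markov's inequality combined with Gronwall's lemma then gives $\mathbb{P}(\sup_{[0,T]} J_t^N \geq 1) \leq C_\beta N^{-\beta}$ for arbitrary $\beta > 0$, which is absorbed into the leading polynomial term of the statement.

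The genuinely new ingredient is the stability estimate for the third term $\WW_p(f_t^N, f_t)$, replacing Proposition \ref{lem:log_lip} by an analogue that uses only the $L^\ell$ bound on the densities. The strategy is the same: I split
\[
\E\bigl[|F^N_\alpha(X - \bar X) - F^N_\alpha(Y - \bar Y)| G^{p-1}\bigr]
\]
according to whether $|X - \bar X| \wedge |Y - \bar Y|$ is above or below an optimized radius $r > 0$, but I bound the inner convolutions by Hölder's inequality:
\[
\int_{|y| \leq r} \frac{1}{|y|^{\alpha+1}} \rho(dy) \leq C\|\rho\|_\ell \, r^{d/\ell' - (\alpha+1)},
\]
which is finite precisely under the standing hypothesis $\alpha + 1 < d/\ell'$. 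This yields a coupling inequality of the same Osgood type as in the Newtonian case but with the $\sqrt{\ln N}$ factor replaced by a power of $N$ whose exponent is controlled in both admissible regimes of $\delta$. Combining this with the trivial pointwise bound
\[
\|(F^N_{\delta,\alpha} - F_\alpha) * \rho_t\|_\infty \leq C\|\rho_t\|_\ell \, N^{-\delta(d/\ell' - \alpha)}
\]
obtained by the same Hölder argument restricted to the ball $B(0, N^{-\delta})$, and applying Gronwall as in Proposition \ref{prop:Cau}, gives $\WW_p(f_t^N, f_t) \leq N^{-\delta}$ for $N$ large enough.

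The main obstacle is precisely this third step: the loss of the $L^\infty$ control on the spatial density forces a Hölder-based decomposition of every convolution, and the two regimes for $\delta$ in the statement correspond exactly to the two scenarios in which the contribution from the cut-off ball can be balanced against the tail without destroying the target rate $N^{-\delta}$ after Gronwall. Once this stability estimate is in hand, assembling the three bounds as in the proof of Theorem \ref{thm:PropChao} gives the claimed probability estimate.
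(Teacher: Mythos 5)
Your overall plan is the right one and coincides with the paper's: the same three-term decomposition of $\WW_p(\mu_t^N, f_t)$, Proposition~\ref{prop_fg} with $x = N^{-p\delta}$ for the middle term, an analogue of Lemma~\ref{lem_j} (with the modified $\tilde\e$ involving $d\delta/\ell'$ and the indicator $\mb_{\ell' > d\delta}$, which is exactly Lemma~\ref{lem:LLN22}) for the first term, and a stability estimate for $\WW_p(f_t^N, f_t)$ for the third. Your Hölder bounds for the localized convolution and for $\|(F^N_{\delta,\alpha} - F_\alpha)*\rho\|_\infty$ are also correct, and the latter does give the rate $N^{-\delta(d/\ell' - \alpha)}$.

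However, your description of the third step contains a genuine misdiagnosis. You propose to run the Osgood-type split of Proposition~\ref{lem:log_lip} and then assert that the resulting Gronwall coefficient is a ``power of $N$'' in place of $\sqrt{\ln N}$. This would be fatal: a Gronwall coefficient of order $N^\epsilon$ produces a prefactor $\exp(CN^\epsilon T)$, which overwhelms the decay $N^{-\delta(d/\ell' - \alpha)}$ no matter how small $\epsilon$ is, and your claim that this exponent is ``controlled in both admissible regimes of $\delta$'' is not substantiated. In fact the mechanism is the opposite of what you describe, and this is the whole point of Section~5: because $\alpha + 1 < d/\ell'$, the kernel $|x|^{-(\alpha+1)}$ lies in $L^{\ell'}_{\mathrm{loc}}$, so $\|\nabla F^N_{\delta,\alpha}*\rho\|_\infty \leq C\|\rho\|_{1,\ell}$ \emph{uniformly in} $N$ (Lemma~\ref{lem_useful2}, part~2). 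Your own Hölder estimate $\int_{|y|\leq r}|y|^{-(\alpha+1)}\rho(dy) \leq C\|\rho\|_\ell\, r^{d/\ell' - (\alpha+1)}$ vanishes as $r \to 0$ precisely under the standing hypothesis, so there is nothing to optimize: the short-range contribution is already harmless without any truncation radius. Consequently no $\sqrt{\ln N}$ weight in $G_N$ and no Osgood balance is needed at all; a plain Gronwall with an $N$-independent coefficient and the $N^{-\delta(d/\ell' - \alpha)}$ forcing term gives Proposition~\ref{prop:Cau2} directly, and $\WW_p(f_t^N, f_t) \leq N^{-\delta}$ follows for $N$ large because $d/\ell' - \alpha > 1$. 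The milder singularity is a \emph{simplification} relative to the Newtonian case, not an extra obstruction, and you should drop the Osgood machinery from this step before trying to implement it.
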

\begin{remark}
	The condition $0 \leq \alpha < d/\ell' - 1$ is required in order to obtain the $L^\ell$ \textit{ a priori} bound on the density $\rho_t$. If $\ell=\infty$ we recover the result for the Newtonian case (Theorem \ref{thm:PropChao}). 
\end{remark}

Set 
\[
l_{\delta,\alpha}^N(x) := 
\left\{ \begin{array}{ll}
\displaystyle \frac{1}{|x|^{\alpha+1}} & \mbox{if } |x| \geq (\alpha+1) N^{-\delta},\\[3mm]
N^{(\alpha+1) \delta} &  \mbox{otherwise},
\end{array} \right.
\]
for $\alpha \in [0,d-1)$. Similarly as before, for notational simplicity, we omit the subscript $\delta$ in $F^N_{\delta,\alpha}$ and $\l^N_{\delta,\alpha}$ in the rest of this section, i.e., $F^N_\alpha = F^N_{\delta,\alpha}$ and $l^N_\alpha = l^N_{\delta,\alpha}$. In the lemma below, we provide the weak-strong gradient estimate and uniform bound estimate of the gradient of force field in the cut-off parameter $N$, which can be obtained in the same manner as Lemma \ref{lem_useful}.

\begin{lemma}\label{lem_useful2} Let $d-1 > \alpha \geq 0$ be given.

1. There exists a constant $C$, which depends only on $d$, such that 
\[
|F^N_\alpha(x) - F^N_\alpha(x+z)| \leq C l^N_\alpha(x) |z|,
\]
for any $x,z \in \R^d$ with $|z|\leq \alpha N^{-\delta}$.

2. There exists a constant $C>0$ independent of $N$ such that
\[
\|l^N_\alpha * \rho\|_\infty \leq C\|\rho\|_{1,\ell} \quad \mbox{and} \quad \|\nabla F^N_\alpha * \rho\|_\infty \leq C\|\rho\|_{1,\ell},
\]
where $\ell > 1$ satisfies $d > (\alpha+1) \ell' $ with $1/\ell' + 1/\ell = 1$.
\end{lemma}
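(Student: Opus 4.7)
The plan is to mimic the proofs of Lemma \ref{lem_useful} given in \cite{LP}, adapting the exponents to accommodate the general singularity parameter $\alpha \in [0,d-1)$ rather than the Newtonian value $\alpha=d-1$.

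For Part~1, I would argue by a mean value inequality along the segment $[x,x+z]$, distinguishing two cases according to the position of $x$ relative to the cut-off radius. When $|x|\geq (\alpha+1)N^{-\delta}$, the constraint $|z|\leq \alpha N^{-\delta}$ forces every point $y$ on the segment to satisfy $|y|\geq |x|-|z|\geq |x|/(\alpha+1)\geq N^{-\delta}$, so $F^N_\alpha$ coincides with $F_\alpha$ there and $|\nabla F_\alpha(y)|\leq |y|^{-(\alpha+1)}\leq (\alpha+1)^{\alpha+1}|x|^{-(\alpha+1)} = C\,l^N_\alpha(x)$. In the complementary regime $|x|<(\alpha+1)N^{-\delta}$ one has $l^N_\alpha(x)=N^{(\alpha+1)\delta}$, which is (up to a universal constant) a uniform pointwise upper bound on $|\nabla F^N_\alpha|$ throughout $\R^d$; integrating along the segment, cut at its crossings with the sphere $|y|=N^{-\delta}$ if necessary, yields the desired bound directly.

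For Part~2, I would establish the bound on $\|l^N_\alpha * \rho\|_\infty$ by splitting the convolution integral $\int l^N_\alpha(x-y)\rho(y)\,dy$ at the two radii $R_1:=(\alpha+1)N^{-\delta}$ and $R_2:=1$. On the inner ball $|x-y|\leq R_1$, where $l^N_\alpha\equiv N^{(\alpha+1)\delta}$, H\"older's inequality with conjugate pair $(\ell,\ell')$ gives a contribution of order $N^{(\alpha+1)\delta}(N^{-\delta})^{d/\ell'}\|\rho\|_\ell$, which stays bounded uniformly in $N$ precisely under the hypothesis $(\alpha+1)\ell'\leq d$. On the annulus $R_1<|x-y|\leq R_2$, H\"older against the $L^{\ell'}$ norm of $|z|^{-(\alpha+1)}\mb_{|z|\leq 1}$ produces a second $\|\rho\|_\ell$ contribution, finite again because $(\alpha+1)\ell'<d$. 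On the far region $|x-y|>R_2$ the integrand is dominated by $\rho(y)$, so the contribution is at most $\|\rho\|_1$. The bound on $\|\nabla F^N_\alpha * \rho\|_\infty$ then reduces to the first bound via the pointwise comparison $|\nabla F^N_\alpha(x)|\leq C\,l^N_\alpha(x)$, verified directly in each of the three subregions $|x|<N^{-\delta}$, $N^{-\delta}\leq |x|<(\alpha+1)N^{-\delta}$, and $|x|\geq (\alpha+1)N^{-\delta}$.

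The main technical obstacle is to keep all constants genuinely independent of $N$; this is delicate only in the inner-region integral, where the blow-up $l^N_\alpha\sim N^{(\alpha+1)\delta}$ is compensated exactly by the shrinking volume $\sim N^{-d\delta/\ell'}$, and one has to check that the borderline case $(\alpha+1)\ell'=d$ still yields a bound uniform in $N$. Apart from this, the argument is a direct extension of the computations in Lemmas~6.1 and~6.3 of \cite{LP}.
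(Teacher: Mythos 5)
Your argument is correct and follows essentially the approach the paper intends: the paper gives no proof of Lemma~\ref{lem_useful2} beyond the remark that it "can be obtained in the same manner as Lemma~\ref{lem_useful}," which in turn cites Lemmas~6.1 and~6.3 of \cite{LP}, and your two-case mean-value argument for Part~1 together with the three-region H\"older decomposition for Part~2 is exactly that adaptation. One small remark: your closing concern about the borderline $(\alpha+1)\ell' = d$ is moot under the lemma's stated strict hypothesis $d > (\alpha+1)\ell'$; moreover, if one did allow equality, the inner-ball contribution would indeed stay bounded (the competing exponents cancel exactly), but the annulus integral $\int_{|z|\leq 1}|z|^{-(\alpha+1)\ell'}\,dz$ would diverge logarithmically, so strictness is genuinely needed there, not in the inner region as you suggest.
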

Then slightly modifying the proof of Lemma \ref{lem:LLN}, we have the following lemma.
\begin{lemma}\label{lem:LLN22}
Let $(Y_1,\cdots,Y_N)$ be i.i.d. random variables of the law $\rho\in L^\ell(\R^d)$, define the associated empirical measure $\rho_N=\frac{1}{N}\sum_{i=1}^N\delta_{Y_i}$. Then define $\tilde \e:=2\kappa\delta+(1-d\delta/\ell')\mb_{\ell'> d\delta}$ with $1/\ell + 1/\ell' = 1$. If $\tilde \e<2$ then for all integer $m>\frac{1}{2-\tilde \e}$ there exist $\tilde \gamma_m,C>0$ such that 
\[
\mathbb{E}\lt[\sup_{1 \leq i \leq N}  \left | h*\rho_N(Y_i)-h*\rho(Y_i) \right |^{2m}  \rt]\leq C N^{-\tilde \gamma_m},
\]
where $\tilde \gamma_m=(2-\tilde \e)m-1$ and $h$ is defined as in \eqref{eq_h}.
\end{lemma}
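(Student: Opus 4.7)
The plan is to closely follow the argument of Lemma~\ref{lem:LLN} (i.e., \cite[Proposition~7.2]{LP}), replacing every step where an $L^\infty$ bound on $\rho$ is used by an application of Hölder's inequality with conjugate exponents $\ell$ and $\ell'$. The substitution of $d\delta$ by $d\delta/\ell'$ in the new exponent $\tilde\e$ will arise precisely from the scaling $|B(0,N^{-\delta})|^{1/\ell'} \sim N^{-d\delta/\ell'}$ of the volume factor entering the bound for the $\rho$-mass of the cut-off ball.

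First I would reduce to a pointwise moment bound via the union bound
\[
\mathbb{E}\lt[\sup_{1 \leq i \leq N} |h*\rho_N(Y_i)-h*\rho(Y_i)|^{2m}\rt] \leq N\,\mathbb{E}\lt[|h*\rho_N(Y_1)-h*\rho(Y_1)|^{2m}\rt],
\]
so that $\tilde\gamma_m = (2-\tilde\e)m-1$ would follow from an individual bound of order $N^{-(2-\tilde\e)m}$. Conditionally on $Y_1$, I would split
\[
h*\rho_N(Y_1)-h*\rho(Y_1) = \tfrac{1}{N}h(0) + \tfrac{1}{N}\sum_{j\neq 1} Z_j, \qquad Z_j:=h(Y_1-Y_j)-\mathbb{E}[h(Y_1-Y_j)\mid Y_1],
\]
where the diagonal term is trivially bounded by $c_0 N^{\kappa\delta-1}$, and the $(Z_j)_{j\neq 1}$ are i.i.d.\ centered given $Y_1$. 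A conditional Rosenthal (or Burkholder--Davis--Gundy) inequality then gives
\[
\mathbb{E}\lt[\bigl|\tfrac{1}{N}\textstyle\sum_{j\neq 1} Z_j\bigr|^{2m}\,\Bigm|\, Y_1\rt] \leq C_m\frac{\sigma^{2m}(Y_1)}{N^m} + C_m \frac{\|h\|_{\infty}^{2m-2}\sigma^2(Y_1)}{N^{2m-1}},
\]
with $\sigma^2(Y_1)\leq \int|h(Y_1-y)|^2 \rho(y)\,dy$ and $\|h\|_\infty \leq c_0 N^{\kappa\delta}$.

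The core step is to bound $\sigma^2$ uniformly in $Y_1$ by splitting the integral at $|Y_1-y|=N^{-\delta}$ and, on the outer part, further at $|Y_1-y|=1$. On the inner piece I would combine $\|h\|_{\infty}^2 \leq c_0^2 N^{2\kappa\delta}$ with Hölder to get
\[
\int_{|Y_1-y|\leq N^{-\delta}}|h|^2\rho(y)\,dy \leq c_0^2 N^{2\kappa\delta}\|\rho\|_\ell\, |B(0,N^{-\delta})|^{1/\ell'} \leq C N^{\delta(2\kappa-d/\ell')};
\]
on the annulus $N^{-\delta}\leq|y-Y_1|\leq 1$ one would bound $|h|^2\leq |y-Y_1|^{-2\kappa}$ and apply Hölder again, yielding the same leading order $N^{\delta(2\kappa-d/\ell')}$ when $2\kappa\ell' > d$ and a strictly better power otherwise; on $|y-Y_1|>1$ one uses $|h|\leq 1$ together with $\int\rho=1$. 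In the complementary large cut-off regime $d\delta\geq\ell'$, one simply keeps $\sigma^2\leq c_0^2 N^{2\kappa\delta}$. Plugging these two cases into the Rosenthal bound, together with the elementary estimate $\mathbb{E}[|Z_2|^{2m}\mid Y_1]\leq 2^{2m}\|h\|_{\infty}^{2m-2}\sigma^2(Y_1)$, produces $\mathbb{E}[|h*\rho_N(Y_1)-h*\rho(Y_1)|^{2m}]\leq C_m N^{-(2-\tilde\e)m}$ with $\tilde\e=2\kappa\delta+(1-d\delta/\ell')\mb_{\ell'>d\delta}$, and multiplication by the union-bound factor $N$ closes the argument.

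The main obstacle is the bookkeeping between the two cut-off regimes and verifying that the exponent delivered by Hölder matches the claimed $\tilde\e$ in each. A delicate subpoint is the outer integrability of $\int_{|y|>N^{-\delta}}|y|^{-2\kappa\ell'}\,dy$, which behaves differently depending on whether $2\kappa\ell'$ exceeds, equals, or is less than $d$; however, truncating at $|y|=1$ and using $|h|\leq 1$ at infinity ensures that only a possibly logarithmic correction appears in the borderline case $2\kappa\ell'=d$, while the leading power $N^{\delta(2\kappa-d/\ell')}$ remains unchanged in all situations, so that no additional conditions beyond $\tilde\e<2$ and $m>1/(2-\tilde\e)$ are needed.
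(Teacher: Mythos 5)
Your overall strategy matches what the paper intends (a pointer to \cite[Proposition~7.2]{LP}, modified by replacing each use of the $L^\infty$ bound on $\rho$ with an $L^\ell$--$L^{\ell'}$ Hölder estimate): reduce via a union bound to a single-particle moment, decompose conditionally on $Y_1$ into a centred sum, apply a Rosenthal-type inequality, and bound the conditional variance and higher moments via Hölder. The appearance of $d\delta/\ell'$ via $|B(0,N^{-\delta})|^{1/\ell'}\sim N^{-d\delta/\ell'}$ is exactly the right mechanism. In the regime $\ell'>d\delta$ your estimate $\sigma^2\leq CN^{\delta(2\kappa-d/\ell')}$ indeed closes the Rosenthal bound to $N^{-(2-\tilde\e)m}$, and the second Rosenthal term is controlled by the first precisely because $d\delta/\ell'<1$ there.

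The gap is in your treatment of the ``large cut-off'' regime $d\delta\geq\ell'$. You propose to ``simply keep $\sigma^2\leq c_0^2 N^{2\kappa\delta}$,'' i.e.\ the trivial $\|h\|_\infty^2$ bound, but that does not deliver the claimed exponent: the first Rosenthal term becomes $\sigma^{2m}/N^m\leq CN^{(2\kappa\delta-1)m}$, while with $\tilde\e=2\kappa\delta$ you need $N^{(\tilde\e-2)m}=N^{(2\kappa\delta-2)m}$; the discrepancy is the factor $N^m$, which is not negligible, and the same defect appears in the second Rosenthal term. The remedy is that you should \emph{not} revert to the crude bound in this regime; Hölder's inequality applies verbatim, giving $\int_{|Y_1-y|\leq N^{-\delta}}|h|^2\rho(y)\,dy\leq c_0^2N^{2\kappa\delta}\|\rho\|_\ell|B(0,N^{-\delta})|^{1/\ell'}\leq CN^{2\kappa\delta-d\delta/\ell'}$, exactly as in the other regime. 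When $d\delta\geq\ell'$ one has $d\delta/\ell'\geq 1$, so this yields $\sigma^2\leq CN^{2\kappa\delta-1}$, from which $\sigma^{2m}/N^m\leq CN^{(2\kappa\delta-d\delta/\ell'-1)m}\leq CN^{(2\kappa\delta-2)m}$ and likewise for the second term, matching $\tilde\e=2\kappa\delta$ (the indicator $\mb_{\ell'>d\delta}$ is there precisely because, when $d\delta\geq\ell'$, the improvement $N^{-d\delta/\ell'}$ exceeds $N^{-1}$ and is therefore absorbed rather than recorded in $\tilde\e$). So the case split is spurious: use the same Hölder bound on the inner ball in both regimes and the exponent comes out correctly.

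A secondary, more minor caveat: you should also verify that the outer-region contribution to $\sigma^2$, which is $O(1)$ when $2\kappa\ell'<d$ and $O(N^{2\kappa\delta-d\delta/\ell'})$ when $2\kappa\ell'>d$, is dominated by the inner-ball term of order $N^{2\kappa\delta-d\delta/\ell'}$ in the parameter range actually used. In the application to Theorem~\ref{thm:PropChao2} this is guaranteed by the stated hypotheses ($2(1+\alpha)\ell'>d$ in case (b), $\ell'\leq d\delta$ in case (a)), but it is worth flagging in the proof since an unqualified claim that the constant piece is always subdominant is not true for all $\kappa,\delta,\ell'$.
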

Due to the milder singularity in the interactions than the Newtonian, we can bound the gradient of the force term uniformly in $N$ in Lemma \ref{lem_useful2}. This also enables us not to introduce the different weights in position and velocity for the error estimate between solutions of the corresponding nonlinear SDE and the one with cut-off. More precisely, we have the following proposition which corresponds to Proposition \ref{prop:Cau}. 

\begin{proposition}\label{prop:Cau2}For a given $T > 0$, let $(Y_t, W_t)$ and $(Y_t^N, W_t^N)$ be the solutions to the equations \eqref{eq:NLSDE_CO1} with $F^N_\alpha$ and \eqref{eq:VPNLSDE1} with $F_\alpha$ appeared in \eqref{eq:VPFP2} on the time interval $[0,T]$, respectively. Suppose that $\rho_t, \rho_t^N \in L^1(0,T;L^\ell(\R^d))$ and $\|\rho_t^N\|_{L^1(0,T;L^\ell)} \leq C$ with $C > 0$ independent of $N$. Then, for $p\geq1$ and $d > (\alpha + 1)\ell'$, we have 
\[
\E\lt[\sup_{0\leq t \leq T}\lt(|Y_t^N-Y_t|+|W_t^N-W_t|\rt)^p \rt]^{1/p} \leq CN^{-(d/\ell' - \alpha)\delta},
\]
where $C$ is a positive constant independent of $p$ and $N$.
\end{proposition}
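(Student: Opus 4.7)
The plan is to follow the strategy of Proposition \ref{prop:Cau} but exploit the milder singularity of $F_\alpha$ to eliminate both the logarithmic loss and the need for the position-weighted quantity $G_N = \sqrt{\ln N}|X-Y|+|V-W|$. Setting $D_s := |Y_s^N - Y_s| + |W_s^N - W_s|$ and $\phi_t^N := \sup_{0\leq s\leq t} D_s^p$, I start from the SDEs for $(Y^N,W^N)$ and $(Y,W)$ (whose Brownian terms cancel) to obtain
\[
\phi_t^N \leq p\int_0^t \lt( D_s + |F^N_\alpha * \rho_s^N(Y_s^N) - F_\alpha * \rho_s(Y_s)| \rt) D_s^{p-1} ds,
\]
and split the force-field difference into a \emph{coupling error} $F^N_\alpha * \rho_s^N(Y_s^N) - F^N_\alpha * \rho_s(Y_s)$ and a \emph{cut-off error} $(F^N_\alpha - F_\alpha) * \rho_s(Y_s)$.

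Next I prove the $\alpha$-analog of the cut-off estimate in Proposition \ref{lem:log_lip}. Replicating its $J_1 + J_2$ decomposition but using the uniform-in-$N$ bound $\|\nabla F^N_\alpha * \rho\|_\infty \leq C\|\rho\|_{1,\ell}$ from Lemma \ref{lem_useful2} in place of Lemma \ref{lem_useful}.2, I obtain
\[
\E\lt[|F^N_\alpha(X-\overline{X}) - F^N_\alpha(Y-\overline{Y})| G^{p-1}\rt] \leq C(\|\rho_1\|_\ell + \|\rho_2\|_\ell)\,\E[G^p],
\]
with $G = |X-Y|+|V-W|$, \emph{without} any $\sqrt{\ln N}$ factor. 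Applying this to couplings of $(Y_s^N,W_s^N)$ and $(Y_s,W_s)$ controls the coupling error by $C(\|\rho_s\|_\ell + \|\rho_s^N\|_\ell)\E[\phi_s^N]$.

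For the cut-off error, $F^N_\alpha$ and $F_\alpha$ agree for $|x|\geq N^{-\delta}$, so
\[
|(F^N_\alpha - F_\alpha) * \rho_s(y)| \leq C\int_{|y-z|\leq N^{-\delta}} |y-z|^{-\alpha}\, \rho_s(z)\,dz,
\]
and H\"older's inequality with exponents $\ell,\ell'$ together with $\alpha\ell' < d$ (guaranteed by $\alpha < d/\ell' - 1$) yields
\[
\|(F^N_\alpha - F_\alpha) * \rho_s\|_\infty \leq C\|\rho_s\|_\ell \lt(\int_{|z|\leq N^{-\delta}}|z|^{-\alpha\ell'}dz\rt)^{1/\ell'} \leq C\|\rho_s\|_\ell\, N^{-\delta(d/\ell' - \alpha)}.
\]
This is precisely the source of the target rate; the exponent is dictated by the interplay between the integrable-singularity threshold and the cut-off scale.

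Collecting the two contributions, taking expectations, and applying Young's inequality to absorb the sub-$p$-th power produced by the cut-off error, I arrive at
\[
\E[\phi_t^N] \leq Cp\int_0^t (1 + \|\rho_s\|_\ell + \|\rho_s^N\|_\ell)\,\E[\phi_s^N]\,ds + C N^{-p\delta(d/\ell' - \alpha)} \int_0^T \|\rho_s\|_\ell\, ds.
\]
Grönwall's lemma, combined with the uniform $L^1(0,T;L^\ell)$ bound on $\rho_s$ and $\rho_s^N$, gives $\E[\phi_t^N] \leq C N^{-p\delta(d/\ell'-\alpha)}$, and taking $p$-th roots concludes. The only delicate point is verifying the H\"older bound on the cut-off error with the correct exponent; everything else is a genuine simplification of the Newtonian argument, because the uniform-in-$N$ Lipschitz control in Lemma \ref{lem_useful2} removes the $\ln N$ obstruction that forced Proposition \ref{prop:Cau} to use the weighted metric $G_N$.
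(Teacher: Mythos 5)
Your proposal is correct and is precisely the adaptation the paper leaves implicit: the paper does not write out a proof of Proposition \ref{prop:Cau2}, only noting that the uniform-in-$N$ bound $\|\nabla F^N_\alpha * \rho\|_\infty \leq C\|\rho\|_{1,\ell}$ from Lemma \ref{lem_useful2} makes the weighted metric $G_N$ and the $\sqrt{\ln N}$ loss in Proposition \ref{prop:Cau} unnecessary, which is exactly what you exploit. Your H\"older computation for the cut-off error, $\|(F^N_\alpha-F_\alpha)*\rho\|_\infty\leq C\|\rho\|_\ell N^{-\delta(d/\ell'-\alpha)}$ using $\alpha\ell'<d$, correctly isolates the source of the rate, and the rest (Young, Gr\"onwall, $p$-th root absorbing the $e^{pCt}$) mirrors the paper's proof of Proposition \ref{prop:Cau}.
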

Let us define functional $\tilde J : \R^{4dN} \to \R_+$ by
$$ 
\tilde J:(\mathcal{X},\mathcal{V},\mathcal{Y},\mathcal{W})\in\R^{4dN} \mapsto 1\wedge\bigl(N^{\delta}|\mathcal{X}-\mathcal{Y}|_{\infty}+N^{\delta}|\mathcal{V}-\mathcal{W}|_{\infty}\bigr). 
$$
In the lemma below, we provide the estimate on $\tilde J^N$ whose proof can be obtained by using the similar argument as in Lemma \ref{lem_j}.
\begin{lemma}\label{lem_j2} Let $(\XX_t^N,\VV_t^N)_{t\geq 0}$ be a solution to \eqref{eq:Nps_CO_a} and let $(\YY_t^N,\WW_t^N)_{t\geq 0}$ be N copies solutions to \eqref{eq:NLps_CO} with $F^N_\alpha$ instead of $F^N$ and the same independent identically distributed initial conditions. Define $\tilde J_t^N$ by
$$
\tilde J_t^N:=\tilde J(\XX_t^N,\VV_t^N,\YY_t^N,\WW_t^N).
$$
Suppose $d > (\alpha + 1)\ell'$ and $\|\rho^N\|_{L^1(0,T;L^\ell)} \leq C$. Then for any $\beta>0$ there is a constant $C_\beta>0$ such that it holds
$$
\mathbb{P}\lt(\sup_{0 \leq t \leq T}\tilde J_t^N \geq 1\rt)\leq \frac{C_\beta}{N^{\beta}}.
$$
\end{lemma}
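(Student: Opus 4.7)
The plan is to mirror the Grönwall--Markov proof of Lemma \ref{lem_j}, replacing Lemma \ref{lem_useful} and Lemma \ref{lem:LLN} by their milder-singularity counterparts Lemma \ref{lem_useful2} and Lemma \ref{lem:LLN22}. Because $\|\nabla F^N_\alpha * \rho\|_\infty$ is now controlled by $C\|\rho\|_{1,\ell}$ uniformly in $N$, the logarithmic weight $\sqrt{\ln N}$ on positions used in Lemma \ref{lem_j} is no longer necessary; this is exactly why $\tilde J^N$ puts equal powers $N^\delta$ on position and velocity deviations.

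Concretely, I would couple \eqref{eq:Nps_CO_a} to $N$ copies of the nonlinear system with cut-off using the same Brownian motions, set
\[
D^i_t := N^\delta \bigl( |X^{i,N}_t - Y^{i,N}_t| + |V^{i,N}_t - W^{i,N}_t| \bigr) \quad \mbox{and} \quad \tau := \inf\bigl\{ t \geq 0 : \tilde J^N_t \geq 1 \bigr\},
\]
and work on $\{t \leq \tau\}$, on which $|(X^{i,N}_s - X^{j,N}_s) - (Y^{i,N}_s - Y^{j,N}_s)| \leq 2N^{-\delta}$ for every pair $i,j$. Applying Lemma \ref{lem_useful2}.1 (after, if needed, rescaling the threshold in $\tilde J^N$ to fit the admissible range $|z| \leq \alpha N^{-\delta}$), summing over $j$, and inserting and subtracting $F^N_\alpha * \rho^N_s(Y^{i,N}_s)$ produces, on $\{t \leq \tau\}$, an inequality of the form
\[
D^i_t \leq C \int_0^t \bigl( 1 + \|\rho^N_s\|_{1,\ell} \bigr) \sup_{1 \leq j \leq N} D^j_s \, ds + N^\delta R^i_t,
\]
where $R^i_t$ gathers the two empirical fluctuations $\int_0^t |F^N_\alpha * (\nu^N_s - \rho^N_s)|(Y^{i,N}_s)\,ds$ and $\int_0^t |l^N_\alpha * (\nu^N_s - \rho^N_s)|(Y^{i,N}_s)\,ds$ with $\nu^N_s := \frac1N \sum_j \delta_{Y^{j,N}_s}$; the deterministic pieces $\|F^N_\alpha * \rho^N_s\|_\infty$ and $\|l^N_\alpha * \rho^N_s\|_\infty$ are uniformly bounded by Lemma \ref{lem_useful2}.2 together with the hypothesis on $\rho^N$.

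Raising to the $2m$-th power for a large integer $m$, taking the supremum over $i$, and applying Grönwall then yields
\[
\E\Bigl[ \sup_i \sup_{0 \leq t \leq T \wedge \tau} (D^i_t)^{2m} \Bigr] \leq C_m N^{2m\delta}\,\E\Bigl[ \sup_i \int_0^T \bigl( |F^N_\alpha * (\nu^N_s - \rho^N_s)|^{2m} + |l^N_\alpha * (\nu^N_s - \rho^N_s)|^{2m} \bigr)(Y^{i,N}_s)\,ds \Bigr].
\]
Since $F^N_\alpha$ and $l^N_\alpha$ both fit the template \eqref{eq_h} with $\kappa = \alpha$ and $\kappa = \alpha+1$ respectively, Lemma \ref{lem:LLN22} bounds the right-hand side by $C_m N^{2m\delta - \tilde\gamma_m}$ with $\tilde\gamma_m = (2 - \tilde\e)m - 1$. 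The condition $d > (\alpha+1)\ell'$ forces $\tilde\e < 2$ even in the worst case $\kappa = \alpha+1$ for every admissible $\delta$, so $\tilde\gamma_m - 2m\delta \to \infty$ as $m \to \infty$, and Markov's inequality gives
\[
\IP\Bigl( \sup_{0 \leq t \leq T} \tilde J^N_t \geq 1 \Bigr) = \IP(\tau \leq T) \leq \IP\Bigl( \sup_i \sup_{0 \leq t \leq T \wedge \tau} (D^i_t)^{2m} \geq 1 \Bigr) \leq C_m N^{2m\delta - \tilde\gamma_m},
\]
which becomes $C_\beta/N^\beta$ upon choosing $m$ large enough in terms of $\beta$. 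The main obstacle will be the more singular of the two fluctuations: it is the kernel $l^N_\alpha$ (with $\kappa = \alpha+1$) that governs whether Lemma \ref{lem:LLN22} can be applied at all, and checking that $\tilde\e < 2$ for this $\kappa$ is precisely the point at which the threshold $d > (\alpha+1)\ell'$ in Lemma \ref{lem_useful2}.2 becomes essential.
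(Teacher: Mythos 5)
Your overall strategy — couple \eqref{eq:Nps_CO_a} to $N$ i.i.d.\ copies of the cut-off nonlinear SDE through the same Brownian motions, introduce a stopping time to stay in the regime where Lemma \ref{lem_useful2}.1 applies, decompose the drift mismatch into a deterministic Gr\"onwall coefficient plus the two empirical fluctuations $F^N_\alpha*(\nu^N-\rho^N)$ and $l^N_\alpha*(\nu^N-\rho^N)$, and close via Lemma \ref{lem:LLN22} and Markov — is exactly the strategy the paper points to (the same as for Lemma \ref{lem_j}, \emph{i.e.}\ \cite[Thm.~4.2]{LP}, with Lemma \ref{lem_useful2} and Lemma \ref{lem:LLN22} replacing their Newtonian counterparts). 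Your remark about rescaling the threshold inside $\tilde J^N$ so that $|z|\leq\alpha N^{-\delta}$ is satisfied is a legitimate and careful observation.

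There is however one genuine quantitative gap in the displayed intermediate estimate. The $l^N_\alpha$ fluctuation does \emph{not} enter the source term $N^\delta R^i_t$: it arises as a coefficient multiplying the Lipschitz term, so the relevant product is
$N^\delta\cdot\bigl(\sup_k|X^k-Y^k|\bigr)\cdot |l^N_\alpha*(\nu^N_s-\rho^N_s)(Y^i_s)|\leq \bigl(\sup_k D^k_s\bigr)\cdot |l^N_\alpha*(\nu^N_s-\rho^N_s)(Y^i_s)|$,
which on $\{t\leq\tau\}$ is further bounded by $|l^N_\alpha*(\nu^N_s-\rho^N_s)(Y^i_s)|$, with \emph{no} surviving factor $N^\delta$. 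After Gr\"onwall and the $2m$-th power, the correct bound is of the form
\[
\E\Bigl[\sup_i(D^i_{T\wedge\tau})^{2m}\Bigr]\leq C_m\Bigl(\E\Bigl[\sup_i\Bigl(\int_0^T|l^N_\alpha*(\nu^N_s-\rho^N_s)(Y^i_s)|\,ds\Bigr)^{2m}\Bigr]+N^{2m\delta}\,\E\bigl[E_T^{2m}\bigr]\Bigr),
\]
where only the $F^N_\alpha$ fluctuation (through $E_T$) carries $N^{2m\delta}$. Your formula instead puts $N^{2m\delta}$ in front of both, which upgrades the requirement on the worse kernel from $\tilde\e_{\alpha+1}<2$ to $\tilde\e_{\alpha+1}<2-2\delta$. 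In the admissible regime $\ell'/d\leq\delta<1/(1+\alpha)$ this stronger condition reads $\delta<1/(\alpha+2)$, which is strictly smaller than the theorem's $\delta<1/(1+\alpha)$; for instance with $\alpha=1$ your estimate would only cover $\delta<1/3$ rather than $\delta<1/2$. This is also internally inconsistent with your final paragraph, where you correctly check $\tilde\e<2$ (not $\tilde\e<2-2\delta$) for $\kappa=\alpha+1$. Once the $l^N_\alpha$ term is weighted correctly — and you note that it is controlled by $\kappa=\alpha+1$, whereas the $F^N_\alpha$ fluctuation is weighted by $N^{2m\delta}$ and controlled by $\kappa=\alpha$ — the two constraints $\tilde\e_{\alpha+1}<2$ and $\tilde\e_{\alpha}<2-2\delta$ together reproduce exactly the $\delta$-conditions of Theorem \ref{thm:PropChao2}, and the Markov step then yields the claimed $C_\beta N^{-\beta}$ bound.
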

\begin{proof}[Proof of Theorem \ref{thm:PropChao2}] The strategy of the proof is the same as the one of Theorem \ref{thm:PropChao}. We first estimate $\WW_p(\mu_t^N, f_t)$ as
\[
\WW_p(\mu_t^N, f_t)\leq \WW_{\infty}(\mu_t^N,\nu_t^N)+\WW_p(\nu_t^N,f_t^N)+\WW_p(f_t^N,f_t),
\]
for any $p \geq 1$, where the empirical measure $\nu_t^N$ is the associated to $N$ copies solutions to the nonlinear SDE with cut-off with $F^N_\alpha$. We now estimate each of the three terms as follows: \newline 
\begin{itemize}
\item It follows from Proposition \ref{prop:Cau2} that for $t\in [0,T]$, we find that 
\[
\WW_p(f_t^N,f_t)\leq CN^{-(d/\ell' - \alpha)\delta} \leq N^{-\delta} \quad \mbox{for $N$ large enough},
\]
due to $d/\ell' - \alpha > 1$. Then we deduce
\[
\mathbb{P} \lt( \WW_p(\mu_t^N, f_t)\geq 3 N^{-\delta}\rt)\leq \mathbb{P} \lt( \WW_{\infty}(\mu_t^N, \nu_t^N)\geq N^{-\delta}\rt)+\mathbb{P} \lt( \WW_p(\nu_t^N, f^N_t)\geq N^{-\delta}\rt).
\]
\item Note that under the event $\{\tilde J_t^N< 1 \}$ we get
\[
\WW_{\infty}(\mu_t^N,\nu_t^N)< N^{-\delta},
\]
thus by Lemma \ref{lem_j2}, we obtain 
\[
\mathbb{P} \bigl( \WW_{\infty}(\mu_t^N, \nu_t^N)\geq N^{-\delta}\bigr)\leq \mathbb{P} \bigl( \tilde J_t^N \geq 1 \bigr)\leq C_\beta N^{-\beta},
\]
for any $\beta>0$.
\item Finally we use Proposition \ref{prop_fg} with $x=N^{-p\delta}$ to have
\[
\mathbb{P} \lt( \WW_p^p(\nu_t^N, f^N_t)\geq N^{-p\delta}\rt) \leq CN^{1 -\frac{(1 - p\delta)(q-\e)}{p}}+a(N,N^{-p\delta}).
\]
\end{itemize}
This completes the proof.
\end{proof}

%
%
%
%
   
\appendix

\section{Gronwall type inequalities}
In this appendix, we provide several Gronwall type integral and  differential inequalities.
\begin{lemma}\label{lem_gron} 1. Let $f, g, h$ be nonnegative functions satisfying 
\bq\label{eq_gron}
f(t) \leq h(t) + \int_0^t g(s)f(s)\,ds, \quad t \geq 0.
\eq
Then we have
\[
f(t) \leq h(0) e^{\int_0^t g(s)\,ds}+ \int_0^t h'(s)e^{\int_s^t g(\tau)\,d\tau} ds, \quad t \geq 0.
\]

2. Let $f$ be nonnegative function satisfying 
\bq\label{ineq_os}
f'(t) \leq Cf(t)(1-\ln^- f(t)) \quad \mbox{for} \quad t \in [0,T],
\eq
where $C > 0$. If $f_0$ satisfies
\[
f_0 < \min\{\exp\lt(1 - e^{CT}\rt), 1\},
\]
then we have
\[
f(t) \leq \exp\lt(1 - (1 - \ln f_0)e^{-Ct} \rt) \quad \mbox{for} \quad t \in [0,T].
\]
In particular, if $f_0=0$, then $f(t) \equiv 0$ for $t \in [0,T]$.

3. Let $\gamma > 1$ and $f$ be nonnegative function satisfying
\[
f(t) \leq C_0 + C_1 \int_0^t f(s)^\gamma\,ds,
\]
where $C_0,C_1 > 0$. Then we have
\[
f(t) \leq \lt(C_0^{1-\gamma} - \frac{C_1^2}{\gamma-1}t \rt)^{\frac{1}{1-\gamma}} \quad \mbox{for} \quad t < \frac{C_0^{1-\gamma}(\gamma-1)}{C_1}.
\]
\end{lemma}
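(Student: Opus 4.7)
The plan is to prove the three parts separately, each by a standard ODE comparison argument; none of them involves the stochastic or kinetic machinery of the main text.

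For Part 1, I would introduce the auxiliary function $F(t) := \int_0^t g(s) f(s)\,ds$, so that \eqref{eq_gron} reads $F'(t) \leq g(t)(h(t) + F(t))$, equivalently $(F(t) e^{-\int_0^t g})' \leq g(t) h(t) e^{-\int_0^t g}$. Integrating this from $0$ to $t$ and substituting back into \eqref{eq_gron} yields the classical linear Gronwall estimate
\[
f(t) \leq h(t) + \int_0^t h(s) g(s)\, e^{\int_s^t g(\tau)\,d\tau}\,ds.
\]
The stated form then follows from the identity $g(s)\, e^{\int_s^t g(\tau)\,d\tau} = -\frac{d}{ds} e^{\int_s^t g(\tau)\,d\tau}$ and one integration by parts in $s$: the boundary term at $s=t$ cancels the leading $h(t)$, the boundary term at $s=0$ produces $h(0) e^{\int_0^t g}$, and what remains is exactly $\int_0^t h'(s)\, e^{\int_s^t g}\,ds$.

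For Part 2, I would observe that as long as $f(t) < 1$ one has $\ln^- f(t) = -\ln f(t)$, so \eqref{ineq_os} is the separable inequality $f'(t)/[f(t)(1 - \ln f(t))] \leq C$. The substitution $u = 1 - \ln f$ identifies the antiderivative of the left-hand side with $-\ln(1 - \ln f)$, and integrating from $0$ to $t$ gives $1 - \ln f(t) \geq (1 - \ln f_0)\, e^{-Ct}$, which rearranges to the claimed bound $f(t) \leq \exp(1 - (1-\ln f_0) e^{-Ct})$. The hypothesis $f_0 < \exp(1 - e^{CT})$ is precisely what keeps this predicted right-hand side strictly below $1$ on $[0,T]$, so a standard continuation/bootstrap argument legitimises the use of $\ln^- = -\ln$ throughout the interval. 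For the corner case $f_0 = 0$, I would apply the bound with $\tilde f_0 = \eta > 0$ to a comparison function and let $\eta \to 0^+$: since $(1 - \ln \eta) e^{-Ct} \to +\infty$, the bound collapses to zero and forces $f \equiv 0$ (this is Osgood-type uniqueness).

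For Part 3, I would set $G(t) := C_0 + C_1 \int_0^t f(s)^\gamma\,ds$, so that $f(t) \leq G(t)$ and $G'(t) = C_1 f(t)^\gamma \leq C_1 G(t)^\gamma$. Dividing by $G(t)^\gamma > 0$ and integrating gives
\[
\frac{G(t)^{1-\gamma} - C_0^{1-\gamma}}{1 - \gamma} \leq C_1 t,
\]
and since $1 - \gamma < 0$ this inverts to a lower bound on $G(t)^{1-\gamma}$; raising to the $1/(1-\gamma)$-power reverses the inequality (the base is positive on the time window where the right-hand side remains positive) and produces the stated explicit ceiling on $f$ up to the blow-up time. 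No step here is a real obstacle: all three parts reduce to textbook separation of variables. The only mild subtlety is in Part 2, where the smallness assumption on $f_0$ must be used through the continuation argument so that the pointwise bound $f(t) < 1$ is not invoked circularly.
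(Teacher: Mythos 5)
Your proposal is correct and follows essentially the same three textbook arguments as the paper: for Part~1, the auxiliary function $F(t)=\int_0^t g f$, an integrating factor, and one integration by parts; for Part~2, the continuation argument that keeps $f<1$ on $[0,T]$ combined with separation of variables for $1-\ln f$; and for Part~3, separation of variables on the majorant $G$. One small but genuine observation: in Part~3 your computation gives the coefficient $(\gamma-1)C_1$ in front of $t$, which is the correct one (from $(F^{1-\gamma})' \geq (1-\gamma)C_1$), whereas the paper's statement writes $C_1^2/(\gamma-1)$ and its own proof writes $C_1/(\gamma-1)$ — these appear to be typos, so your derivation does \emph{not} literally ``produce the stated ceiling'' but rather the corrected version of it.
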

\begin{proof} 1. Set
\[
F(t) := \int_0^t g(s)f(s)\,ds,
\]
then it follows from \eqref{eq_gron} that
\[
F'(t) = g(t)f(t) \leq g(t)\lt(h(t) + F(t) \rt).
\]
This yields
\[
\lt(F(t) e^{-\int_0^t g(s)\,ds} \rt)' \leq h(t)g(t)e^{-\int_0^t g(s)\,ds} = -h(t)\lt(e^{-\int_0^t g(s)\,ds} \rt)'.
\]
Since $F_0 = 0$, we get
\[
F(t) e^{-\int_0^t g(s)\,ds} \leq -h(t)e^{-\int_0^t g(s)\,ds} + h(0) +  \int_0^t h'(s)e^{-\int_0^s g(\tau)\,d\tau} ds,
\]
i.e.,
\[
F(t)  \leq -h(t) + h(0) e^{\int_0^t g(s)\,ds} + \int_0^t h'(s)e^{\int_s^t g(\tau)\,d\tau} ds. 
\]
Thus we have
\[
f(t) \leq h(t) + F(t) \leq h(0) e^{\int_0^t g(s)\,ds} + \int_0^t h'(s)e^{\int_s^t g(\tau)\,d\tau} ds.
\]

2. We first claim that 
\[
f(t) < 1 \quad \mbox{for} \quad t \in [0,T].
\]
Since $f$ is continuous, there exists a $T_0>0$ such that $f(t) < 1$ for $t \in [0,T_0)$. Set
\[
T^* = \sup\lt\{ t \in [0,T] : f(s) < 1 \quad \mbox{for} \quad s \in [0,t] \rt\}.
\]
Let us assume that $T^* < T$. Then for $t \in [0,T^*)$, we obtain from \eqref{ineq_os} that
\[
(\ln f)' \leq C(1 - \ln f(t)), \quad i.e.,\quad (1 - \ln f)' \geq -C(1 - \ln f(t)).
\]
This yields
\[
1 - \ln f(t) \geq (1 - \ln f_0) e^{-Ct}, \quad i.e., \quad \ln f(t) \leq 1 - (1 - \ln f_0)e^{-Ct}.
\]
Thus we get
\[
f(t) \leq \exp\lt(1 - (1 - \ln f_0)e^{-ct} \rt) \quad \mbox{for} \quad t \in [0,T^*).
\]
We now let $t \to T^*$ in the above inequality to find
\[
1 = \lim_{t \to T^*}f(t) \leq \exp\lt(1 - (1 - \ln f_0)e^{-ct} \rt) < 1,
\]
due to the assumption on the initial data $f_0$. This is a contradiction that $T^* < T$ and implies $T^* = T$. Hence we have $f(t) < 1$ and 
\[
f(t) \leq \exp\lt(1 - (1 - \ln f_0)e^{-ct} \rt) \quad \mbox{for} \quad t \in [0,T].
\]

3. Set
\[
F:= C_0 + C_1\int_0^t f(s)^\gamma\,ds, 
\]
then we get $F(0) = C_0$ and 
\[
F'(t) = C_1f(t)^\gamma \leq C_1 F(t)^\gamma.
\]
This yields
\[
(F^{1-\gamma})' \geq \frac{C_1}{1-\gamma},
\]
and for $t < C_0^{1-\gamma}(\gamma-1)C_1^{-1}$
\[
f(t) \leq F(t) \leq \lt(F(0)^{1-\gamma} - \frac{C_1}{\gamma-1}t \rt)^{\frac{1}{1-\gamma}} = \lt(C_0^{1-\gamma} - \frac{C_1}{\gamma-1}t \rt)^{\frac{1}{1-\gamma}}.
\]
\end{proof}


\section*{Acknowledgements}
JAC was partially supported by the EPSRC grant number EP/P031587/1. YPC was supported by NRF grant(No. 2017R1C1B2012918 and 2017R1A4A1014735) and POSCO Science Fellowship of POSCO TJ Park Foundation. SS was supported by the Fondation des Sciences Math\'ematiques de Paris and Universit\'e Paris-Sciences-et-Lettres. The authors would like to thank Maxime Hauray for many fruitful discussions.

%
%
%
%

\bibliographystyle{abbrv}
\bibliography{VPFP}

\end{document}